\numberwithin{equation}{section}
\newextarrow{\xbigtoto}{{20}{20}{20}{20}}
{\bigRelbar\bigRelbar{\bigtwoarrowsleft\rightarrow\rightarrow}}
\newcommand{\longtoto}{\xbigtoto{}}
\newcommand{\cplx}{L}
\newcommand{\man}{M}
\newcommand{\spac}{X}
\newcommand{\Hq}{H_1}
\renewcommand{\ring}{k}
\newcommand{\HGo}{H_G^{\mr{odd}}}
\newcommand{\HGe}{H_G^{\mr{even}}}
\newcommand{\Km}{K^-}
\newcommand{\Kp}{K^+}
\newcommand{\Kpm}{K^\pm}
\newcommand{\jpm}{j^\pm}
\newcommand{\npm}{n_\pm}
\newcommand{\p}{e}
\newcommand{\ppm}{\p_\pm}
\newcommand{\pppm}{p_\pm}
\newcommand{\qpm}{q^\pm}
\newcommand{\wpm}{w_\pm}
\renewcommand{\HH}{\H_{H}}
\newcommand{\HKm}{\H_{\Km}}
\renewcommand{\HKp}{\H_{\Kp}}
\newcommand{\HKpm}{\H_{\Kpm}}
\newcommand{\HKz}{\H_{K_0}}
\newcommand{\HHz}{\H_{H_0}}
\newcommand{\MVS}{Mayer--Vietoris sequence\xspace}
\newcommand{\RR}{\mathbb R}
\begin{document}

\title{The equivariant cohomology ring of a cohomogeneity-one action}
\author{Jeffrey D. Carlson,
		Oliver Goertsches,
		Chen He, and
		Augustin-Liviu Mare}

\maketitle

\begin{abstract}
	We compute the rational Borel equivariant cohomology ring
	of a cohomogeneity-one action of a compact Lie group.
\end{abstract}

\section{Introduction}\label{sec:intro}

Cohomogeneity-one Lie group actions---%
that is, those whose orbit space is one-dimensional---%
form an intensively-studied class of examples
which are a 
next natural object of study after homogeneous actions.
In lieu of a necessarily incomplete attempt to summarize the vast geometric literature surrounding these actions,
we content ourselves with a gesture toward
the substantial bibliography to be found in the recent classificatory work of Galaz-Garc{\'i}a and Zarei~\cite{galazgarciazarei2015}.
%

Given the prominence of these actions,  
it is natural to wonder 
what can be said of their equivariant cohomology. 
Due to earlier work of two of the authors~\cite{goertschesmare2014,goertschesmare2017},
it is known the rational Borel equivariant cohomology ring
is Cohen--Macaulay, and structure theorems for this 
ring have been worked out in special cases~\cite[Cor.~4.2, Props.~5.1, 5.10]{goertschesmare2014}
along with topological consequences for 
the manifold acted on.
In this work, we describe the equivariant cohomology rings of a certain broad class of cohomogeneity-one actions (delineated precisely in the discussion after \Cref{thm:GGZalexandrov}), obtaining more explicit expressions in the case of actions on manifolds.

In the most interesting case, where the space $\spac$
acted on by a compact, connected Lie group $G$ is a manifold and 
the orbit space $\spac/G$ is a closed interval,
\Cref{thm:MostertGGZ}, 
due mostly to Mostert,
implies $\spac$ can be written up to $G$-equivariant homeomorphism as the double mapping cylinder
$\smash{G/K^- \union \big([-1,1]\x G/H\big) \union G/K^+}$ 
of a pair of quotient maps $G/H \rightrightarrows G/\Kpm$ for some closed subgroups $\defm H \leq \defm \Kpm$ of $G$.
By work of Galaz-Garc{\'i}a--Searle (\Cref{thm:GGZalexandrov}),
the same holds if $X$ is an Alexandrov space.
As such, the equivariant Mayer--Vietoris sequence is applicable
to the cover of $\spac$ by the preimages of two subintervals of $\spac/G$.\footnote{\ 
The non-equivariant Mayer--Vietoris sequence of the same cover has also long been used to study 
such spaces~\cite{grovehalperin1987,hoelscher2010homology,escherultman2011}.
}
As the equivariant cohomology of the restricted actions is well-known,
this approach would in general recover the additive structure
up to an extension problem,
but in our case, surprisingly, we are able to determine the 
entire ring structure. 
This is \Cref{thm:mainH}. 
In \Cref{sec:special}, 
we prove more explicit formulas depending on the numbers $\dim \Kpm/H \pmod 2$ in the case $\spac$ is a manifold $\man$,
so that $\Kpm/H$ are homology spheres.
In the following result and throughout,
$\defm{\H_\Gamma} \ceq H^*_\Gamma(*\mspace{2mu};\Q) = H^*(B\Gamma;\Q)$
will denote the Borel $\Gamma$-equivariant cohomology ring of a point with rational coefficients. In fact, all cohomology will take rational coefficients
unless explicitly specified otherwise.

 \begin{restatable}{theorem}{Hodd}\label{thm:Hodd}
 	Let $\man$ be the double mapping cylinder
	of the quotient maps $G/H \longtoto G/\Kpm$ for closed subgroups $ H <  \Kpm$ of a compact Lie group $G$
	such that $K^\pm/H$ are homology spheres.\\
 	\vspace{-1em}
 	\ \\
\nd	(a) 
 	Assume $K^+/H$ is odd-dimensional
 	and $K^-/H$ even-dimensional,
 	and the bundle $BH \lt BK^+$ is orientable.
 	Then we have an $\HG$-algebra isomorphism
 	\[
 	\HG M \iso \HKm \+ \p \HH[\p] < \HH[\p],  
 	\]
 	where $\HKp \iso \HH[\p]$
 	for a certain class $\p \in H_{\Kp}^{1 + \dim \Kp/H}$,
 	the product $\HKm \x \HH[\p] \lt \HH[\p]$ is determined by the injection
 	$\HKm \longmono \HH$,
 	and the $\HG$-module structure is induced
 	by the inclusions $\Kpm \longinc G$.
 	If $\Kp/H$ is a sphere, then $\p$ is 
 	the Euler class of the sphere bundle
 	$BH \lt B\Kp$.
 	\\ 
 	\vspace{-1em}
 	\ \\
 	\nd	 (b) 
 	Assume that both $K^{\pm}/H$ are odd-dimensional
 	and the bundles $BH \lt B\Kpm$ are both orientable.
 	Then we have an $\HG$-algebra isomorphism
 	\[
 	H^*_G \man \, \iso \,\quotientmed{\HH[\p_-,\p_+]\,}{(\p_- \p_+)},
 	\]
 	where $\HKpm \iso \HH[\ppm]$
 	for classes $\ppm \in H_{\Kpm}^{1 + \dim \Kpm/H}$
 	and the $\HG$-module structure is induced
 	by the inclusions $\Kpm \longinc G$.
 	If $\Kpm/H$ is a sphere, then $\ppm$ is 
 	the Euler class of the sphere bundle
 	$BH \lt B\Kpm$.
 \end{restatable}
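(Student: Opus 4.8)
The plan is to apply the equivariant Mayer--Vietoris sequence to the cover of $M$ by the two half-open tubes described in the introduction, and to feed into it the ring structure of the restriction maps $\HKpm \to \HH$ supplied by the Gysin sequences of the sphere bundles $BH \to B\Kpm$. First I would record the algebraic input coming from the homology-sphere hypothesis. Passing to Borel constructions turns the equivariant double mapping cylinder into the ordinary one, $M_G \simeq B\Km \cup_{BH} \big([-1,1]\times BH\big) \cup_{BH} B\Kp$, along the maps $B\Km \leftarrow BH \to B\Kp$ induced by $H < \Kpm$. Each $\Kpm/H$ is a rational homology sphere $S^{n_\pm}$ with $n_\pm = \dim \Kpm/H$, and I analyze the Gysin sequence of $\Kpm/H \to BH \to B\Kpm$. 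When $n_\pm$ is even the rational Euler class vanishes, so the sequence splits and $r^\pm\colon \HKpm \to \HH$ is injective; this injectivity also follows directly from the equal-rank inclusion $H < \Kpm$, which gives $\HKpm = \H_T^{W_{\Kpm}} \hookrightarrow \H_T^{W_H} = \HH$ for a common maximal torus $T$, so no orientability on this side is needed. When $n_\pm$ is odd and $BH \to B\Kpm$ is orientable, the transgression of the fibre fundamental class is a nonzero element $\ppm \in \HKpm^{1+n_\pm}$ --- the Euler class when $\Kpm/H$ is a genuine sphere --- and, being nonzero in the polynomial domain $\HKpm$, it is regular; the Gysin sequence then degenerates to the assertion that $r^\pm$ is surjective with kernel $(\ppm)$. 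A rank count (the polynomial ring $\HKpm$ has exactly one generator more than $\HH$, matching Krull dimensions) upgrades this to a ring isomorphism $\HKpm \iso \HH[\ppm]$ under which $r^\pm$ is the augmentation $\ppm \mapsto 0$.

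Next I run the multiplicative equivariant Mayer--Vietoris sequence
\[
\cdots \to H^k_G M \xrightarrow{(i^-,i^+)} \HKm^k \oplus \HKp^k \xrightarrow{\,r^- - r^+\,} \HH^k \xrightarrow{\ \delta\ } H^{k+1}_G M \to \cdots .
\]
In both cases at least one restriction map (the one attached to an odd sphere) is surjective, so $r^- - r^+$ is surjective, the connecting map $\delta$ vanishes, and the sequence collapses into short exact sequences
\[
0 \to H^*_G M \xrightarrow{(i^-,i^+)} \HKm \oplus \HKp \xrightarrow{\,r^- - r^+\,} \HH \to 0 .
\]
Since the comparison map is a ring homomorphism, this exhibits $H^*_G M$ as the fibre-product ring $\HKm \times_{\HH} \HKp$, and it remains to identify this pullback using the normal forms above.

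In case (a) I write $\HKp = \HH[\p]$ with $r^+$ the augmentation and regard $\HKm \hookrightarrow \HH$. An element of the pullback is a pair $(a,b)$ with $b\in\HH[\p]$ whose constant term satisfies $b(0) = r^-(a)$; projecting to $b$ embeds the pullback isomorphically onto $\{\,b\in\HH[\p] : b(0)\in\HKm\,\} = \HKm \oplus \p\,\HH[\p]$, the claimed subring of $\HH[\p]$, with the action of $\HKm$ on $\p\HH[\p]$ induced by $\HKm \hookrightarrow \HH$. In case (b) I have $\HKpm = \HH[\ppm]$ with both $r^\pm$ the augmentation, and the map $\HH[\p_-,\p_+]/(\p_-\p_+) \to \HKm \times_{\HH} \HKp$ sending $\p_- \mapsto (\p_-,0)$, $\p_+ \mapsto (0,\p_+)$, and $c \mapsto (c,c)$ for $c\in\HH$ is well defined (as $(\p_-,0)\cdot(0,\p_+)=(0,0)$) and is a ring isomorphism, with inverse reading off the two augmentations and the two positive-degree parts. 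In both cases the $\H_G$-module structure is the one induced by $\H_G \to \HKpm$, i.e.\ by the inclusions $\Kpm \hookrightarrow G$.

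The main obstacle I anticipate is the first step: pinning down the genuine ring isomorphism $\HKpm \iso \HH[\ppm]$, rather than merely the additive splitting, together with the regularity of $\ppm$ and its identification with the Euler class in the honest-sphere case; and, relatedly, confirming that the Mayer--Vietoris comparison map respects products, so that the additive fibre-product description promotes to a ring isomorphism. Once these structural facts are in hand, everything else is a formal manipulation of pullbacks of graded rings.
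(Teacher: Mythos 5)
Your overall route is the same as the paper's: \Cref{thm:mainH} (the multiplicative Mayer--Vietoris argument) identifies $\HG\man$ with the fiber product $\HKm\x_{\HH}\HKp$ once the middle map is surjective, and the theorem then follows by computing that fiber product from the normal forms of the two restriction maps. Your endgame --- projecting the pullback onto the $\HH[\p]$ factor in case (a), and the explicit isomorphism with $\HH[\p_-,\p_+]/(\p_-\p_+)$ in case (b) --- is exactly the paper's proof of \Cref{thm:Hodd}, and it is correct as written. The even-dimensional input (injectivity of $\HKm\to\HH$ via equal rank) is also handled as in \Cref{thm:even}.

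The genuine gap is the step you yourself flag: the ring isomorphism $\HKpm\iso\HH[\ppm]$ with $\rho^*$ the augmentation. Your proposed justification --- ``a rank count: the polynomial ring $\HKpm$ has exactly one generator more than $\HH$'' --- does not work, for two reasons. First, when $\Kpm$ (hence possibly $H$) is disconnected, neither $\HKpm$ nor $\HH$ need be a polynomial ring; they are invariant rings $(\HKz)^{\pi_0 K}$ and $(\H_{H_0})^{\pi_0 H}$, and invariant rings of finite groups are polynomial only under special circumstances. Second, even granting polynomiality, knowing $\HH\iso\HKpm/(\ppm)$ and counting generators does not by itself produce a ring-theoretic section $\HH\to\HKpm$; one must lift algebra generators and verify that together with $\ppm$ they freely generate. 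The paper's \Cref{thm:odd} gets around both problems by passing to identity components, establishing that $\H_{\Hq}$ (for $\Hq=H\cap K_0$) is polynomial, splitting the surjection $\HKz\to\H_{\Hq}$ of polynomial rings \emph{equivariantly} with respect to $\pi_0 K$ via \Cref{thm:polysurj} (which uses semisimplicity of $\Q[\pi_0K]$-modules to choose a $\pi$-stable complement to $(\p)$ that is a subring), and then taking $\pi$-invariants. The remark following \Cref{thm:odd} ($K=\O(2)$, $H=\Z/2$ generated by a reflection) shows concretely that without the orientability hypothesis no such splitting exists even though the rings involved are polynomial, so this step cannot be dismissed as bookkeeping. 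Similarly, the multiplicativity of the Mayer--Vietoris connecting map, which you assume in order to promote the additive fiber product to a ring isomorphism, is the content of \Cref{thm:MVconn} and requires a (short but real) cochain-level argument.
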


 In the event both spheres are even-dimensional, the generators
 of the Weyl groups $W(\Kpm)$ with respect 
 to a shared maximal torus generate a dihedral 
 subgroup of the automorphisms of this torus, of order $2k$.
 It is this $k$ that figures in the following result.
 
 \begin{restatable}{theorem}{Heven}\label{thm:Heven} 
 	Let $\man$ the double mapping cylinder
 	of the quotient maps $G/H \longtoto G/\Kpm$ for closed subgroups $ H <  \Kpm$ of a compact Lie group $G$
 	such that $K^\pm/H \homeo S^{2\npm}$ 
 	are even-dimensional spheres,
 	and the bundles $BH \lt B\Kpm$ are both orientable. 
 	Then the number $k(n_-+n_+)$ is even,  
 	and we have an $\HG$-algebra isomorphism
 	\[
	 	\HG \man \iso 
	 	(\im \rho_+^* \inter \im \rho_-^*) 
			\ox 
		H^*S^{k(n_-+n_+)+1},
 	\]
 	where the injections $\rho_\pm^*\:\HKpm \lt \HH$
 	are induced by the inclusions $H \longinc \Kpm$
 	and the $\HG$-module structure is induced
 	by $\Kpm \longinc G$.
 \end{restatable}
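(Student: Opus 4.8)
The plan is to run the equivariant Mayer--Vietoris sequence for the cover of $\man$ by the $G$-invariant preimages of the two half-open subintervals of $\man/G$, whose intersection retracts $G$-equivariantly onto the principal orbit $G/H$ and whose two pieces retract onto the singular orbits $G/\Kpm$. Since $H^*_G(G/\Kpm)\iso\HKpm$ and $H^*_G(G/H)\iso\HH$, and the two restriction maps are the inclusion-induced $\rho_\pm^*\colon\HKpm\to\HH$, the sequence reads
\[
\cdots\to H^*_G\man\xrightarrow{\ r\ }\HKm\oplus\HKp\xrightarrow{\ \phi\ }\HH\xrightarrow{\ \delta\ }H^{*+1}_G\man\to\cdots,
\]
with $\phi(a_-,a_+)=\rho_-^*a_- - \rho_+^*a_+$. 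The restriction $r$ is a ring map, so breaking the long exact sequence into short exact pieces presents $H^*_G\man$ as an extension of $\ker\phi$ by an ideal $J\iso(\operatorname{coker}\phi)[-1]$. The whole computation thus reduces to identifying $\ker\phi$, describing $\operatorname{coker}\phi$ as a module over $\ker\phi$, and trivializing the resulting ring extension.

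First I would treat $\ker\phi$. Because $\Kpm/H\homeo S^{2\npm}$ are even-dimensional spheres, $\Kpm$ and $H$ have equal rank and share a maximal torus $T$; writing $\HH=S^{W_H}$ and $\HKpm=S^{W_{\Kpm}}$ for the rings of Weyl-invariant polynomials on $\mathfrak t$, the maps $\rho_\pm^*$ become inclusions of invariant subrings and are in particular injective. Hence $\ker\phi\iso\im\rho_+^*\inter\im\rho_-^*=:R$, exactly the first tensor factor in the statement; being the common invariants $S^{\langle W_{\Kp},W_{\Km}\rangle}$ of the finite reflection group generated by the two Weyl groups, $R$ is again a polynomial ring by Chevalley--Shephard--Todd.

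The heart of the argument is $\operatorname{coker}\phi=\HH/(\im\rho_+^*+\im\rho_-^*)$, which I would compute with Poincar\'e series. The equal-rank sphere fibrations $\Kpm/H\hookrightarrow BH\to B\Kpm$ show $\HH$ is free of rank two over $\im\rho_\pm^*$ on a generator in degree $2\npm$, i.e. $\mathrm{Hilb}\,\HH=(1+t^{2\npm})\,\mathrm{Hilb}\,\im\rho_\pm^*$; combined with $\ker\phi=R$ and inclusion--exclusion, the cokernel has series $\mathrm{Hilb}\,\HH-\mathrm{Hilb}\,\im\rho_+^*-\mathrm{Hilb}\,\im\rho_-^*+\mathrm{Hilb}\,R$. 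Evaluating this requires the relative coinvariant series $\mathrm{Hilb}\,\HH/\mathrm{Hilb}\,R$, and here the dihedral hypothesis enters: the extra reflections $s_\pm$ generating $W_{\Kpm}$ over $W_H$ generate a dihedral group of order $2k$, and a direct computation of the relative coinvariants of this action—using that $k$ odd forces $\np=\nm$, so the non-polynomial cases are excluded—shows the cokernel is a free $R$-module of rank one on a single generator $\bar\omega$ of degree $k(\nm+\np)$. Since $\operatorname{coker}\phi$ is a subquotient of the evenly graded ring $\HH$, this degree is even, which is the asserted parity of $k(\nm+\np)$.

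Finally I would assemble the ring. The connecting map sends $\bar\omega$ to a class $z:=\delta\bar\omega\in H^{k(\nm+\np)+1}_G\man$ generating the ideal $J=R\cdot z\iso R[-(k(\nm+\np)+1)]$, so additively $H^*_G\man\iso R\oplus Rz$. Because $z$ lies in odd degree, graded-commutativity over $\Q$ forces $z^2=0$; and because $R\iso\ker\phi$ is polynomial, its generators lift along the surjective ring map $r\colon H^*_G\man\to\ker\phi$ to a polynomial subring splitting the extension. Together these give $H^*_G\man\iso R[z]/(z^2)=R\ox H^*S^{k(\nm+\np)+1}$, with the $\HG$-module structure that of the Mayer--Vietoris sequence, induced by $\Kpm\longinc G$. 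I expect the main obstacle to be the middle step: pinning down the relative coinvariant Poincar\'e series of the dihedral pair $\bigl(W_H,\langle s_+,s_-\rangle\bigr)$, and thereby the single degree $k(\nm+\np)$ in which the cokernel is concentrated over $R$, which is precisely what produces the odd sphere factor.
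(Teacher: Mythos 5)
Your skeleton is the paper's: the equivariant Mayer--Vietoris sequence collapses by parity to a four-term exact sequence, $H_G^{\mathrm{even}}\man\iso\ker\phi=\im\rho_+^*\cap\im\rho_-^*=:R$ (both $\rho_\pm^*$ being injective by the equal-rank condition), and everything reduces to showing that $\operatorname{coker}\phi=\HH/(\im\rho_+^*+\im\rho_-^*)$ is a free $R$-module of rank one on a generator of degree $k(n_-+n_+)$. (One small simplification you miss: because $\HH$ and $\HKpm$ are evenly graded there is no extension problem at all --- $H^{\mathrm{even}}_G\man$ \emph{is} $\ker\phi$ as a subring and $H^{\mathrm{odd}}_G\man$ is a square-zero ideal, so the final ``splitting'' step, and with it your appeal to Chevalley--Shephard--Todd, is unnecessary. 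That appeal is also unjustified as stated: $\Xi$ is generated by Weyl groups of possibly disconnected groups and need not be a reflection group, and $\HH$ itself need not be a polynomial ring.)

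The genuine gap is the step you yourself flag as the heart of the argument. Asserting that ``a direct computation of the relative coinvariants'' shows $\operatorname{coker}\phi$ is free of rank one over $R$ in degree $k(n_-+n_+)$ is precisely the content of the paper's Lemmas on the $D_{2k}$-action (\Cref{thm:wpm}--\Cref{thm:negativeone}) together with the trichotomy of \Cref{thm:trichotomy}, and your proposed Hilbert-series route does not deliver it. First, a Poincar\'e-series identity can at best match graded dimensions; it cannot by itself show that multiplication by a single class $\bar\omega$ gives an $R$-module isomorphism $R\to\operatorname{coker}\phi$, which is what the algebra structure of the theorem requires. Second, even the series $\operatorname{Hilb}R=\operatorname{Hilb}(\HH)^{D_{2k}}$ is not accessible by Molien without detailed knowledge of how each element of $D_{2k}$ acts on $\HH$ --- a non-reflection action on a ring that is not canonically polynomial --- and extracting that information is exactly what the paper's eigenspace analysis does: it decomposes $\HH$ under $r=w_+w_-$, proves the trichotomy ($p_+=p_-\in E_0$; or $k=2$ and $p_\pm\in E_{k/2}$; or $p_\pm=q-w_\pm q$ with $\gcd(j,k)=1$), shows the classes $wp_\pm$ are prime, and exhibits the explicit generator $P$ ($=p$, $p_+p_-$, or $q^k-w_\pm q^k$) with $V=P\cdot R\iso\operatorname{coker}\phi$ via \Cref{thm:negativeone}. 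Your parenthetical ``$k$ odd forces $n_+=n_-$'' is itself one of the nontrivial outputs of that analysis, not an input you can assume. Until this middle step is actually carried out, the proof is not complete.
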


\nd Cohomogeneity-one actions whose orbit space is $S^1$
arise as mapping tori of right translations $r_n$ of homogeneous 
spaces $G/K$ by elements $n \in N_G(K)$, 
and this case admits a parallel but much more easily-proved 
statement we discuss in \Cref{sec:mappingtori}.

\begin{restatable}{theorem}{circle}\label{thm:circle}
Let $\man$
be the mapping torus
of the right translation by $n \in N_G(K)$
on the homogeneous space $G/K$
of a compact Lie group $G$.
Then one has $\H S^1$- and $(\HG \ox \H S^1)$-algebra isomorphisms
\eqn{
	\H \man &\iso \H(G/K)^{\ang{r_n^*}} \ox \H S^1,\\
	\HG \man &\iso \H(BK)^{\ang{r_n^*}} \ox \H S^1
}
respectively, where the $\H S^1$-module structure
is given by pullback from $M/G$ in both cases
and the $\HG$-algebra structure is induced by
the inclusion $K \longinc G$.
\end{restatable}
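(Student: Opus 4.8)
The plan is to realize $\man$ and its Borel construction $\man_G$ as mapping tori fibered over the orbit space $\man/G \iso S^1$, and then to replace the (possibly infinite-order) monodromy by a finite-order one, so that the entire ring structure can be read off from a free finite quotient. First I would note that the orbit projection presents $\man$ as the mapping torus of the $G$-equivariant diffeomorphism $r_n\:G/K \to G/K$, $gK \mapsto gnK$ (well defined and $G$-equivariant because $n \in N_G(K)$), with fiber $G/K$ over $\man/G \iso S^1$; applying the Borel construction fiberwise and using $EG \times_G (G/K) \iso BK$ exhibits $\man_G$ as the mapping torus of the induced self-map of $BK$. The $\H S^1$-module structures are then the pullbacks along $\man \to \man/G$ and $\man_G \to \man/G$, and the $\HG$-algebra structure is the pullback along $BK \to BG$, so it remains only to compute the two rings.

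The key step is to arrange $r_n^N = \mathrm{id}$ for some $N$. Since $r_n$ depends only on the class $\bar n \in N_G(K)/K$, and $N_G(K)/K$ is a compact Lie group, I would choose a torsion element $\bar n'$ of order $N$ in the same connected component as $\bar n$; such an element exists because every component of a compact Lie group meets the finite-order elements (a topological generator of the closure $\overline{\ang{\bar n}} \iso \mathbb{T}^r \times \ZZ/m$ lies in the same component as the order-$m$ generator of its finite factor). A path in $N_G(K)/K$ from $\bar n$ to $\bar n'$ induces a $G$-equivariant homotopy $r_n \simeq_G r_{n'}$, so $r_n^* = r_{n'}^*$ on (equivariant) cohomology and the two mapping tori are $G$-homotopy equivalent; hence we may assume from the outset that $r_n^N = \mathrm{id}$.

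With $r_n$ of finite order $N$, the mapping torus becomes a free finite quotient: collapsing the translation subgroup $N\ZZ$ identifies $\man \simeq_G (G/K \x S^1)/(\ZZ/N)$, where the generator acts diagonally by $r_n$ on $G/K$ and by rotation through $1/N$ on $S^1$. This action is free because the rotation is, and since it commutes with the $G$-action the same manipulation after the Borel construction gives $\man_G \simeq (BK \x S^1)/(\ZZ/N)$ with $\ZZ/N$ acting by the induced self-map of $BK$ together with rotation. Because the rotation acts trivially on $\H S^1$ and $N$ is invertible in $\Q$, passing to invariants under the free action yields the $\Q$-algebra isomorphism
\[
\H \man \iso \bigl(\H(G/K) \ox \H S^1\bigr)^{\ZZ/N} \iso \H(G/K)^{\ang{r_n^*}} \ox \H S^1,
\]
and likewise $\HG \man \iso \H(BK)^{\ang{r_n^*}} \ox \H S^1$; these are ring isomorphisms because the transfer identification $\H\bigl((G/K \x S^1)/(\ZZ/N)\bigr) \iso \H(G/K \x S^1)^{\ZZ/N}$ for a free action of a group of order invertible in the coefficients is multiplicative.

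I expect the main obstacle to be precisely this finite-order reduction: identifying that, even when $n$ generates a non-torsion (possibly dense) one-parameter subgroup of $N_G(K)/K$, the cohomological monodromy $r_n^*$ is realized by a genuinely finite-order self-map lying in the same $G$-homotopy class. Once this is secured, the free-quotient computation delivers the ring---not merely the additive---structure at a stroke, which is what makes this case so much more easily proved than the interval case; the remaining points (the fibration identifications and the matching of the $\H S^1$- and $\HG$-module structures, where over $\Q$ the base circle $\man/G$ is cohomologically insensitive to the $N$-fold covering) are routine.
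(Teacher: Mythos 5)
Your proposal is correct and follows essentially the same route as the paper: identify $\man$ (and, after the fiberwise Borel construction, $\man_G$) with a free quotient of $G/K\x S^1$ (resp.\ $BK\x S^1$) by a finite cyclic group acting by $r_n$ together with a rotation, and apply the transfer isomorphism, using that the rotation acts trivially on cohomology. The only (immaterial) difference is that you first replace $n$ by a torsion element of $N_G(K)/K$ in the same component so that $r_n$ has honestly finite order, whereas the paper passes to a power $n^\ell$ in the identity component and uses the $\ell$-sheeted covering of the mapping torus of $r_n$ by that of $r_n^\ell\simeq\id$.
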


The unexpectedly great utility of the \MVS
in our situation results from an additional structural feature 
of the sequence that seems not to be frequently noted,
namely the fact that the connecting map
preserves a module structure over the cohomology ring 
of the whole space. 
This result is proved in Section \ref{sec:MV}.

\medskip

\nd \emph{Acknowledgments.} 
The authors would like to thank the referee for careful proofreading, 
for suggesting a reference, 
and for making an important correction to their statement of \Cref{thm:MostertGGZ}.
The first author would like to thank Omar Antol{\'i}n Camarena 
for helpful conversations  
and the National Center for Theoretical Sciences (Taiwan)
for its hospitality during a phase of this work.

\section{The \MVS}\label{sec:MV}

%

Let $G$ be a topological group, $\spac$ a $G$-space,
and $A,B \sub X$ two $G$-invariant subsets whose 
interiors cover $\spac$.
The rings $\HG A \x \HG B$ and $\HG(A \inter B)$ in the \MVS inherit an $\HG \spac$-module structure
by restriction. It is clear the restriction maps between
these rings are $\HG \spac$-module homomorphisms and we claim the connecting map is as well. It is enough to prove the analogous result for singular cohomology,
as the Mayer--Vietoris sequence in equivariant cohomology is just the Mayer--Vietoris
sequence of the associated cover $(A_G,B_G)$ of the homotopy orbit space $\spac_G$.

\bprop\label{thm:MVconn}
Let $\spac$ be a topological space,
$A$ and $B$ a pair of subspaces whose interiors cover $\spac$,
and $\ring$ any commutative ring with unity.
Then the connecting map ${\d_{\mr{MV}}}\: \H(A \cap B;\ring) \lt \H(\spac;\ring)[1]$ in the \MVS of $(X;A,B)$
is a homomorphism of $\H(\spac;\ring)$-modules.
\eprop
\bpf
The covering hypothesis makes the inclusion $C_*(A) + C_*(B) \longinc C_*(X)$
of singular chain complexes
a quasi-isomorphism,
whose dual
$C^*\mn(X;\ring) \longepi \Hom_\Z\!\big(C_*(A) + C_*(B),\ring\big) \eqc \defm{\cplx}$
is then again a quasi-isomorphism.
The Mayer--Vietoris sequence in cohomology is the long exact sequence 
arising from the short exact sequence of cochain complexes
\[
	0 \to 
	\cplx \os{\defm i}\lt 
	C^*(A;\ring) \x C^*(B;\ring) \os{\defm j}\lt 
	C^*(A \inter B;\ring) \to 
	0,
\]
where $i(c) = (c|_A,c|_B)$ and $j(c_A,c_B) = c_A|_{A \inter B} - c_B|_{A \inter B}$.
To define the connecting map, 
given a homogeneous cocycle $z \in Z^q(A\inter B;\ring)$,
one selects cochains $c_A \in C^q(A;\ring)$ and $c_B \in C^q(B;\ring)$
such that $j(c_A,c_B) = z$,
and then $\d_{\mr{MV}}[z] \in H^{q+1}(\spac;\ring)$ 
is the class represented by
the unique element $z'$ of $Z^{q+1}(\cplx)$
such that $i(z') = (\d c_A, \d c_B)$.
Now given a class in $H^p(\spac;\ring)$,
the restrictions of some representative $x$ to $A,B$, and $A \cap B$
factor through the representative $x|_{\cplx} \in Z^p(\cplx)$.
Observe that $x|_{A \inter B} \cup z = j(x|_A \cup c_A,x|_B \cup c_B)$.
But since $\d x = 0$, we have 
$\big(\d(x|_A \cup c_A),\d(x|_B \cup c_B)\big) = 
({x|_A \cup \d c_A},{x|_B \cup \d c_B}) = i(x|_L \cup z')$,
so $\d_{\rm{MV}}\big([x] \cup [z]\big) = [x] \cup \d_{\rm{MV}}[z]$ as claimed.
\epf

\brmk
This feature turns out not to be specific to Borel equivariant cohomology,
but applies to multiplicative cohomology theories in general, 
and the first author proves this fact and an extension of \Cref{thm:mainH}
to other cohomology theories in an accompanying paper~\cite{carlson2018cohomogeneityK},
along with analogues of the other main results in equivariant K-theory.
\ermk

\section{Equivariant cohomology rings}\label{sec:main}

To deploy Proposition \ref{thm:MVconn} as promised,
we need the structure theorem for cohomogeneity-one actions on manifolds.

\begin{figure}
	\caption{Schematic of a double mapping cylinder}%
	\label{fig:cylinder}%
	\centering{
		\includegraphics[height=4.5cm]{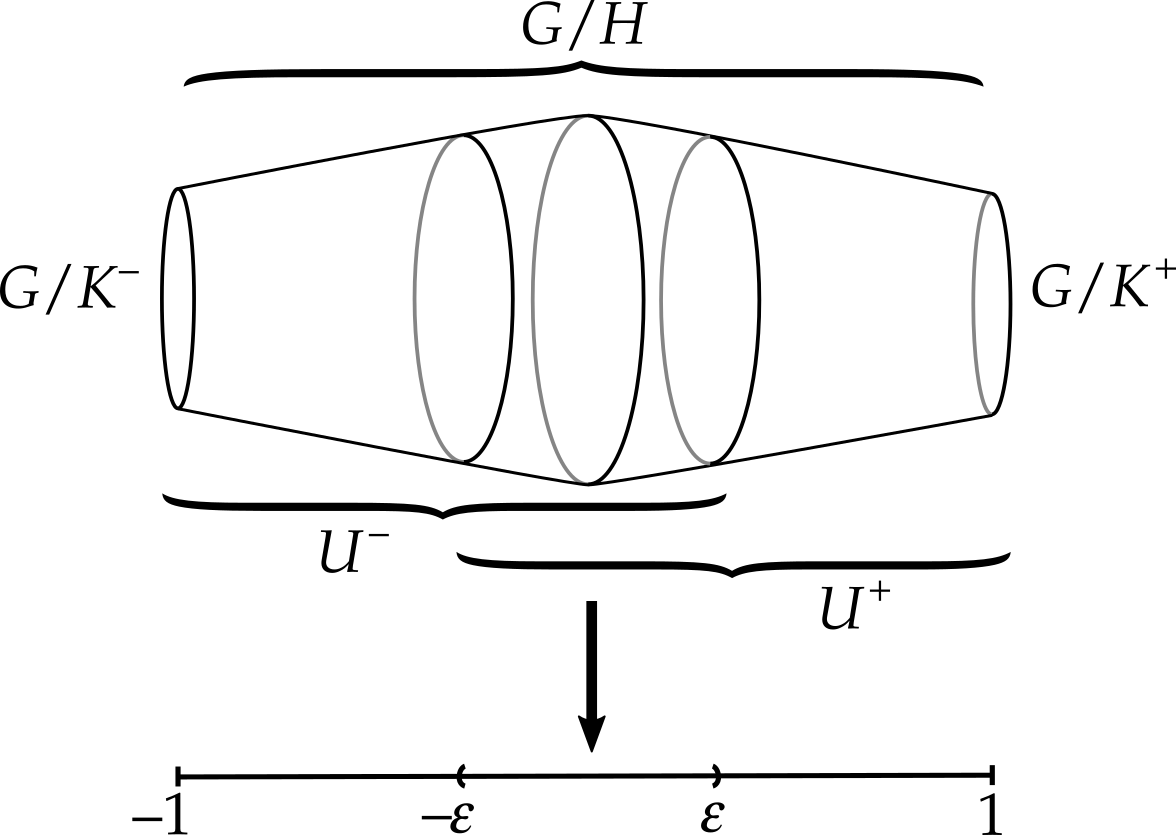}
	}
\end{figure}

\bthm[{\cite[Thm.~4]{mostert1957cohomogeneity}%
	\cite{mostert1957errata}%
	\cite[Thm.~A]{galazgarciazarei2015}}]\label{thm:MostertGGZ}
Let $G$ be a compact, connected Lie group acting continuously
with cohomogeneity one on a connected 
topological manifold $\man$ without boundary. 
Then $\man$ is, up to $G$-equivariant homeomorphism, as follows.
\bitem
\item If $\man/G \homeo (-1,1)$, there is a closed subgroup $K \leq G$
such that $\man \homeo (-1,1) \x G/K$.
\item If $\man/G \homeo S^1$, there are a closed subgroup $K \leq G$
and an element $n\in N_G(K)$ such that 
$\man$ is the mapping torus of the right translation
of $G/K$ by $n$. 
(The equivariant homeomorphism type of the resulting space depends only on the class
of $n$ in $\pi_0 N_G(K)$.)
\item If $\man/G \homeo [0,1)$, there are closed subgroups $H < K \leq G$
such that $K/H$ is either a sphere $S^n$ 
or the Poincar\'{e} homology sphere $P^3$
and $\man$ is the open mapping cylinder 
$\smash{G/K \underset\pi\union \big([0,1) \x G/H\big)}$ 
of the projection $\pi\: G/H \lt G/K$.
\item If $\man/G \homeo [-1,1]$, 
there are closed subgroups $H < K^\pm \leq G$
such that each of $K^\pm/H$ is either a sphere $S^n$ 
or the Poincar{\'e} homology sphere $P^3$
and $\man$ is the double mapping cylinder 
\[
{G/K^- 
	\underset{\pi^-}\union 
\big([-1,1]\x G/H\big) 
	\underset{\pi^+}\union 
G/K}
\]
of the projections 
$\pi^\pm\: G/H \longtoto G/\Kpm$.
\eitem
Conversely, these constructions yield only cohomogeneity-one
$G$-actions on manifolds.
In the cases where $M/G$ has boundary,
$M$ admits a $G$-invariant smooth structure if and only if 
no isotropy quotient $K/H$ or $\Kpm/H$ is $P^3$.
\ethm

Before proceeding, we note the noncompact cases are trivial
for our purposes,
since in these cases $\man$ equivariantly deformation retracts to 
the cohomogeneity-zero case $G/K$.
There is a similar classification of cohomogeneity-one actions on 
closed Alexandrov spaces.

\bthm[{\cite[Thm.~A]{galazgarciasearle2011}}]\label{thm:GGZalexandrov}
Let $G$ be a compact Lie group 
 acting effectively and isometrically with cohomogeneity one on a
closed (i.e., compact and without boundary) 
Alexandrov space $\spac$.
Then $\spac$ is, up to $G$-equivariant homeomorphism, as follows.
\bitem
\item 
If $\spac/G \homeo S^1$,
there are a closed subgroup $K \leq G$
and an element $n\in N_G(K)$ such that 
$\spac$ is the mapping torus of the right translation
of $G/K$ by $n$
(and hence, by \Cref{thm:MostertGGZ}, a smooth manifold).
\item 
If $\spac/G \homeo [-1, 1]$,
there are closed subgroups $H <  K^\pm \leq G$
such that $K^\pm/H$ are positively-curved homogeneous spaces
and $\spac$ is the double mapping cylinder 
of the projections 
$G/H \longtoto G/\Kpm$.
\eitem
Conversely, these constructions yield only 
cohomogeneity-one $G$-actions on Alexandrov spaces.
\ethm

Thus our \Cref{thm:mainH} will apply more generally than just to manifolds.
Because of these two classification results, 
it is reasonable to focus our attention in the rest of the paper 
on cohomogeneity-one actions of the following types:

\bitem
\item the mapping torus of a right translation 
on a homogeneous space $G/K$ 
or
\item
the double mapping cylinder of a span of projections
$G/K^- \from G/H \to G/K^+$.
\eitem

%
%

\subsection{Mapping tori}\label{sec:mappingtori}
By \Cref{thm:MostertGGZ},
if the orbit space of a cohomogeneity-one action is a circle,
the space in question can be assumed to be a manifold $M$,
the mapping torus of the right-translation $\defm{r_n}$ of some element $\defm n \in N_G(K)$ on $G/K$,
and hence actually a smooth manifold $M$.

\blem\label{thm:mappingtorus}
Let $Y$ be a topological space, 
$\varphi$ a self-homeomorphism 
of $Y$ such that some finite power $\varphi^\ell$ is 
homotopic to $\id_Y$,
and $\spac$ the mapping torus of $\varphi$.
Then \[
\H \spac \iso \xt{}{\H (Y)^{\ang{\varphi^*}}}{\H S^1}.
\] 
\elem
\bpf
Note that $\spac$ admits an $\ell$-sheeted cyclic covering 
by the mapping torus of $\varphi^\ell$,
which is homeomorphic to the mapping torus $Y \x S^1$
of the identity. 
The covering action is conjugate to a $\Z/\ell$-action on 
$Y \x S^1$ under which $1 + \ell\Z$ acts, up to homotopy,
as $(y,\t) \lmt \big(\varphi(y),\t+ \frac {2\pi}\ell\big)$,
which, rotating the circle component,
is in turn homotopic to $(y,\t) \lmt \big(\varphi(y),\t\big)$.
A standard lemma on the transfer map~\cite[Prop.~3G.1]{hatcher}
then gives
\[
\H \spac \iso \H(Y \x S^1)^{\Z/\ell} \iso \H(Y)^{\ang{\varphi^*}} \ox \H S^1.
\qedhere
\]
\epf

%
%

\circle*
\bpf
We continue to denote by
$r_n \mspace{-1.5mu}\: G/K \lt G/K$ 
right multiplication 
by an element $n\in N_G(K)$. 
As $N_G(K)$ is compact, it has finitely many path-components,
so some power $n^\ell$ lies in the path-component of the identity
and hence the corresponding power $r_n^\ell$ is homotopic to the 
identity. 
Applying \Cref{thm:mappingtorus} to $\man$ 
gives the first displayed isomorphism
and applying it to $\man_G$ gives the second.
%
\epf

\subsection{Double mapping cylinders}

Let $G$ be a compact Lie group and $H \leq K^\pm \leq G$
any closed subgroups.
Then the double mapping cylinder $\spac$ of $\pi^\pm\: G/H \longtoto G/\Kpm$
admits the obvious invariant open cover
by the respective inverse images $U^-$ and $U^+$
of the subintervals $[-1,\e)$ and $(-\e,1]$, for some small $\e > 0$, 
of $\spac/G \homeo [-1,1]$ depicted in \Cref{fig:cylinder}.
Their intersection
$W = U^- \cap U^+$ equivariantly deformation retracts to $G/H$ and 
$U^\pm$ to $G/\Kpm$
in such a way that the inclusions $W \longinc U^\pm$
correspond to the projections $\pi^\pm$.
Now we can apply \Cref{thm:MVconn}.

\begin{restatable}{theorem}{mainH}\label{thm:mainH}
	Let $\spac$ be the double mapping cylinder
	of the projections 
	$\pi^\pm\: G/H \longtoto G/\Kpm$.
	Then one has a graded $\HG$-algebra and a graded $\HH$-module isomorphism,
	respectively:
	\[
	\HGe \spac \,\iso\, \xu{\HH}{\HKm}{\HKp}
	,\qquad
	\HGo  \spac \,\iso\mspace{1mu} 
	\Big(\frac{\HH}{\im \rho^*_- + \im \rho^*_+}\Big)[1],
	\]
	where $\defm{\rho^*_\pm}\: \HKpm \lt \HH$ are induced by the inclusions 
	$H \longinc \Kpm$ and $\HKm \x_{\HH} \HKp$ denotes 
	the fiber product.\footnote{\ 
	That is, $\HKm \x_{\HH} \HKp < \HKm \x \HKp$
	is the subring of pairs $(x_-,x_+)$ 
	such that $\rho^*_{-}(x_-) = \rho^*_{+}(x_+)$.
}
	The multiplication of odd-degree elements is zero,
	and the product 
	$\HGe \spac \x \HGo \spac \lt \HGo \spac$
	descends from the multiplication of $\HH$
	in that
	\[
	(x_-,x_+)\.\bar q = \ol{ \rho^*_{-}(x_-) \.q }.
	\]
	for $(x_-,x_+) \in \xu{\HH}{\HKm}{\HKp}$ and 
	$\bar q \in H_G^{\mr{odd}}\spac$
	the image of $q \in \HH$.
\end{restatable}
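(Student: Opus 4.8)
The plan is to feed the invariant open cover $(U^-,U^+)$ with intersection $W$ into the equivariant \MVS and to exploit the elementary but decisive fact that, for any compact Lie group $\Gamma$, the ring $\H_\Gamma = H^*(B\Gamma)$ is concentrated in even degrees: by Borel's theorem it is the ring of $\pi_0\Gamma$-invariants in a polynomial ring on generators of even degree. Under the identifications $\HG U^\pm \iso \HKpm$ and $\HG W \iso \HH$ coming from $(G/\Gamma)_G \homeo B\Gamma$, the restrictions $\HG U^\pm \lt \HG W$ become the inclusion-induced maps $\rho^*_\pm$, and the sequence becomes a long exact sequence
\[
\cdots \lt \HG \spac \os{i^*}\lt \HKm \+ \HKp \os{j^*}\lt \HH \os{\d}\lt \HG \spac \lt \cdots,
\]
in which $i^*$ is the pair of restrictions, $j^* = \rho^*_- - \rho^*_+$, and $\d$ raises degree by one.

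First I would extract the additive structure from the parity of this sequence. Since $\HH$ and $\HKpm$ vanish in odd degree, the connecting map into any even-degree group has trivial (odd-degree) source, so $i^*$ is injective in even degree with image $\ker j^* = \xu{\HH}{\HKm}{\HKp}$; dually, $i^*$ out of any odd-degree group has trivial (odd-degree) target, so $\d$ is surjective in odd degree with kernel $\im j^* = \im\rho^*_- + \im\rho^*_+$. This yields the stated additive isomorphisms $\HGe\spac \iso \xu{\HH}{\HKm}{\HKp}$ and $\HGo\spac \iso \big(\HH/(\im\rho^*_- + \im\rho^*_+)\big)[1]$, the shift recording the degree raised by $\d$.

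For the ring structure, the even-degree isomorphism is automatically multiplicative because $i^*$ is a ring homomorphism, identifying $\HGe\spac$ with the subring $\xu{\HH}{\HKm}{\HKp}$. The mixed product I would pin down using \Cref{thm:MVconn}, applied to the Borel construction $\spac_G$, which makes $\d$ a map of $\HG\spac$-modules: for $\alpha \in \HG\spac$ and $q \in \HH = \HG W$ it gives $\alpha \. \d[q] = \d\big((i_W^*\alpha)\cup q\big)$, where $i_W^*\: \HG\spac \lt \HH$ is restriction to $W$. If $\alpha \in \HGe\spac$ corresponds to $(x_-,x_+)$ in the fiber product, then $i_W^*\alpha = \rho^*_-(x_-) = \rho^*_+(x_+)$ by the fiber-product relation, and the formula becomes $(x_-,x_+)\.\bar q = \ol{\rho^*_-(x_-)\.q}$, as asserted.

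The vanishing of products of odd classes then drops out of the same linearity: if $\bar q_1 = \d[q_1]$ lies in $\HGo\spac$, its restriction $i_W^*\bar q_1$ lands in the odd part of $\HH$, which is zero, so $\bar q_1\.\bar q_2 = \d\big((i_W^*\bar q_1)\cup q_2\big) = \d(0) = 0$. The only delicate point is bookkeeping: one must check that the sign conventions for $j^*$ and $\d$ in the \MVS match those of \Cref{thm:MVconn} so that the module identity holds on the nose, and correctly identify $i_W^*\alpha$ with $\rho^*_-(x_-)$ through the fiber-product description — after which the even-degree concentration of $\H_\Gamma$ does all the genuine work.
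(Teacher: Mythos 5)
Your proposal is correct and follows essentially the same route as the paper: the even-degree concentration of $\H_H$ and $\H_{K^\pm}$ collapses the equivariant \MVS into four-term exact sequences, $i^*$ being a ring map gives the even part, and \Cref{thm:MVconn} gives the mixed product. The only (harmless) variation is your argument that odd classes multiply to zero via the module property of the connecting map and the vanishing of $i_W^*$ on odd classes, where the paper instead notes that the product lands in $\HGe\spac$, on which $i^*$ is injective, while $i^*(x)i^*(y)=0$.
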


\bpf
For any $\G \leq G$
we have homeomorphisms  
$(G/\G)_G = EG \x_G G/\G \homeo EG/\G = B\G$.
As $\smash{H^*_{ K^\pm}}$ and $\smash{H^*_H}$ are concentrated in even degree, 
the \MVS of the cover just discussed
reduces to a four-term exact sequence
\[
	0 \to
	\HGe\spac \os{\defm{i^*}}\lt
	\HKm \x \HKp \lt 
	\HH \lt
	\HGo\spac \os{i^*}\to
	0,
\]
so that $\HGe\spac$ is the kernel and
$\HGo\spac$ the cokernel of the middle map
$(x_-,x_+) \lmt \rho_+^*(x_+) - \rho_-^*(x_-)$.
The multiplicative structure on $\HGe$
follows from the fact $i^*\: \HG \spac \lt \HKm \x \HKp$
is a ring map,
the description of the product 
$\HGe \spac \x \HGo \spac \lt \HGo \spac$
follows from \Cref{thm:MVconn},
and the fact the product 
$\HGo \spac \x \HGo \spac \lt \HGe \spac$
is zero follows from the observation
$i^*$ is injective on $\HGe\spac$ and yet
$i^*(xy) = i^*(x)i^*(y) = 0$
for any elements $x,y \in \HGo\spac$.
\epf

\bex
Let $G = \O(n)$ with $K = K^\pm = \O(3)$ and $H = \O(2)$ block-diagonal.
We have $\HK \iso \Q[p_1] \iso \HH$,
where $p_1$ is the first Pontrjagin class of the tautological 
bundle over the infinite Grassmannian $\Gr(3,\Ri) = B\O(3)$,
so $\HG \spac \iso \Q[p_1]$.
\eex

\bex
In the situation where $G = K^\pm$,  
the resulting double mapping cylinder is just the unreduced 
suspension $S(G/H)$.
One has 
\eqn{
\phantom{\mbox{and}}\qquad
\HGe S(G/H) = \HG, &\qquad
\HGo S(G/H) = \quotientmed{\HH\,}{\im(\mspace{-1mu}\HG \to \HH)} [1].
}

\eex

\section{Maps of classifying spaces}\label{sec:mapBG}

In the event the cohomogeneity-one double mapping cylinder is a manifold $\man$, 
we will presently see that more precise descriptions of $H^*_G \man$ can be obtained depending on the dimensions
of the isotropy spheres $\Kpm/H$,
subject to an orientation hypothesis
on the left action of $\Kpm$ in case these groups are disconnected,
and these descriptions depend crucially on 
the structure of $\HH$ as a module over $\HKpm$. 

\blem\label{thm:simple}
Let $H < K$ be compact Lie groups such that $K/H$ is a
homology sphere. 
The $K/H$-bundle $\smash{\rho\:BH \lt BK}$
is orientable if and only if 
left multiplication on $K/H$ by any element of $K$ 
induces the identity in cohomology. 
\elem
\bpf
Recall orientability of a fiber bundle is defined as triviality 
of the action of the fundamental group of the base on the cohomology of the fiber.
To determine the action-up-to-homotopy of a class of $\pi_1(BK,e_0 K)$
on the fiber,
lift a representative loop $\eta$ to a 
path $\wt\eta$ in $EK$
starting at $e_0$ and ending at some $e_0 k_1$.
Then for any $e_0 kH$ in the fiber $\rho\-(e_0 K)$,
the path $\wt\eta kH$ lifts $\eta$ to $BH$,
starting at $e_0kH$ and ending at $e_0 k_1 k H$,
which we may define to be $\eta \.e_0kH$.
Under the identification $\rho\-(e_0 K) \homeo K/H$
given by $e_0 kH \bij kH$,
this is just the action of $\pi_0 K$ 
induced by the defining homogeneous action of $K$ on $K/H$.
The generator $1 \in H^0(K/H)$ is invariant trivially,
so the action is trivial in cohomology 
if and only if the fundamental class $[K/H]$ is fixed by the $\pi_0 K$-action.
\epf

\brmk\label{rmk:connected}
Particularly, this rules out the case $K/H \homeo S^0$ going forward.
\ermk

\begin{proposition}[Cf. Goertsches--Mare~{\cite[Prop.~3.1]{goertschesmare2014}%
\cite[Prop.~4.2]{goertschesmare2017}}]\label{thm:odd}
Let $H < K$ be compact Lie groups 
such that $K/H$ is a homology sphere of odd dimension $n$
and $BH \lt BK$ is an orientable $K/H$-bundle.
Then $\rho^*\:\HK \lt \HH$ is a surjection
and can be written 
\[
	\HH[\p]
		\xrightarrow{\p \mapsto 0} 
	\HH,
\]
where $\defm\p \in H^{n+1}_H$ is the generalized Euler class of the bundle
$K/H \to BH \to BK$.
\end{proposition}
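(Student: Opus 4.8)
The plan is to analyze the oriented sphere bundle $\rho\: BH \lt BK$ with fiber the rational homology $n$-sphere $K/H$ by means of its Gysin sequence, exploiting throughout that $\HK$ and $\HH$ are concentrated in even degrees. Orientability (equivalently, by \Cref{thm:simple}, triviality of the $\pi_0 K$-action on $H^*(K/H)$) guarantees that the Serre spectral sequence of $K/H \to BH \to BK$ has untwisted $\Q$-coefficients. As it is then concentrated in the two rows $q \in \{0,n\}$, its sole potentially nonzero differential is the transgression $d_{n+1}$, cup product with the Euler class $\p$ of degree $n+1$, and the sequence is equivalent to the Gysin sequence
\[
\cdots \lt \HK^{k} \xrightarrow{\rho^*} \HH^{k} \xrightarrow{\rho_!} \HK^{k-n} \xrightarrow{\ \cup\,\p\ } \HK^{k+1} \xrightarrow{\rho^*} \HH^{k+1} \lt \cdots,
\]
in which $\rho_!$ denotes integration over the fiber.

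First I would record that, $K$ and $H$ being compact Lie groups, $\HK$ and $\HH$ are rings of invariants of polynomial algebras on even-degree generators and hence vanish in odd degrees. The crux is then a parity observation: since $n$ is odd, the target $\HK^{k-n}$ of $\rho_!\: \HH^k \lt \HK^{k-n}$ lies in odd degree whenever $k$ is even, while the source $\HH^k$ vanishes when $k$ is odd; in either case $\rho_!$ is the zero map. Substituting $\rho_! = 0$ into the Gysin sequence then yields three conclusions simultaneously. Exactness at $\HH^k$ shows $\rho^*$ is surjective; exactness at $\HK^{k-n}$ shows $\ker(\cup\,\p) = \im \rho_! = 0$, so that $\p$ is a non-zero-divisor; and exactness at $\HK^k$ identifies $\ker \rho^* = \im(\cup\,\p) = \p\,\HK$. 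Since the edge homomorphism $\HK \lt \HH$ of the resulting collapsed spectral sequence is exactly the ring map $\rho^*$, this establishes a graded ring isomorphism $\HH \iso \HK/(\p)$ with $\p$ regular of degree $n+1$, which is the essential content behind the displayed presentation $\HK \iso \HH[\p]$, $\p \mapsto 0$.

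To read off the polynomial form itself, I would note that regularity of $\p$ makes multiplication by $\p^j$ an isomorphism $\HK/(\p) \iso (\p^{j})/(\p^{j+1})$, so that the $\p$-adic associated graded ring of $\HK$ is $\HH[\p]$; lifting algebra generators of $\HH$ along the surjection $\rho^*$ and adjoining $\p$ then produces, via a graded-Nakayama argument, a generating set for $\HK$. Finally I would identify $\p$ with the transgressed fundamental class, i.e.\ the generalized Euler class of the bundle, remarking that it reduces to the usual Euler class of the sphere bundle when $K/H$ is a genuine sphere.

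The step I expect to be the main obstacle is upgrading $\HH \iso \HK/(\p)$ to an honest algebra isomorphism $\HK \iso \HH[\p]$: this requires a ring-theoretic section of $\rho^*$, and a priori the lifted relations of $\HH$ need only hold modulo $\p$. In the cases that matter here---above all when $K$ is connected, so that $\HK$ is a genuine polynomial algebra and $\p$ is simply one of its generators---the obstruction is absent, and the even-degree concentration forced by the odd-dimensionality of $K/H$ is precisely the structural feature that keeps it from appearing.
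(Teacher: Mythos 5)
Your Gysin-sequence analysis of the first half is sound and runs on the same engine as the paper's own argument: both extract surjectivity of $\rho^*$, injectivity of $\cup\,\p$, and $\ker\rho^* = (\p)$ from the two-row spectral sequence by playing the parity of $n$ against the evenness of $\HK$ and $\HH$. (The paper runs this comparison on the connected covers $BH_1 \to BK_0$, with $H_1 = H\cap K_0$, alongside $BH\to BK$; your direct version of this step is fine.) Up to the graded ring isomorphism $\HH \iso \HK/(\p)$ with $\p$ a non-zero-divisor, your proof is complete.

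The gap is exactly where you place it, and it cannot be dismissed with ``in the cases that matter here, $K$ is connected'': the proposition is stated for arbitrary compact $H < K$ and is applied in \Cref{thm:Hodd} to arbitrary closed isotropy subgroups, which are often disconnected. In that generality $\HK$ and $\HH$ are only the invariant subrings $(\HKz)^{\pi_0 K}$ and $(\H_{\Hq})^{\pi_0 K}$ of polynomial rings, your appeal to ``$\HK$ is a genuine polynomial algebra and $\p$ is simply one of its generators'' is unavailable, and the lifted relations of $\HH$ really do a priori hold only modulo $(\p)$, so your associated-graded/Nakayama argument produces a generating set but not the presentation. The paper closes this by three steps your proposal omits: it first shows $\H_{\Hq}$ is itself polynomial (which needs a separate argument, via normality of $H_0$ in $K_0$, when $K/H \homeo S^1$ or $P^3$); it then proves \Cref{thm:polysurj}, which uses complete reducibility of $\Q[\pi_0 K]$-representations to choose a $\pi_0 K$-equivariant complementary subalgebra $A' < \HKz$ with $A'[\p] = \HKz$; and it finally takes $\pi_0 K$-invariants of the resulting equivariant isomorphism $\H_{\Hq}[\p] \iso \HKz$ to obtain $\HH[\p] \iso \HK$. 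The remark following \Cref{thm:odd}, with $K = \O(2)$ and $H$ of order two, shows how sensitive this descent is to the interaction between $\pi_0 K$ and the connected-cover Euler class; the disconnected case is where the real content of the proposition lies, so a complete proof must supply some version of this equivariant splitting.
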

\bpf
Let $K_0$ be the identity component of $K$
and write $\defm{\Hq} = H \cap K_0$,
so that $K_0/\Hq \lt K/H$ is a homeomorphism.
We claim $\H_{\Hq}$ is a polynomial ring.
This follows if $K/H$ is a sphere of dimension at least $2$ 
since then $K_0/\Hq \homeo K/H$ is both simply-connected and covered by $K_0/H_0$,
forcing $H_0 = \Hq$. 
If $K/H$ is homeomorphic to $S^1$ or the Poincar{\'e} homology sphere $P^3$,
then we still know
$H^*_{\Hq} \iso (\HHz)^{\Hq/H_0}$, 
but the action of $\Hq/H_0$ on $\HHz$ 
is trivial because it is known~%
\cite[Pf., Prop. 3.1]{goertschesmare2014}%
\cite[Pf., Prop. 4.2]{goertschesmare2017}
that $H_0$ is normal in $K_0$
in these cases, so the action of $\Hq/H_0$
is the restriction of an action of $K_0/H_0$,
which is homotopically trivial since $K_0/H_0$ is path-connected.\footnote{\ 
	To make this account self-contained, the proof of normality is thus.
	The transitive action of $K_0$ on $K_0/H_1$ induces 
	a map $\defm \l\:K_0 \lt \Homeo K_0/H_1$
	whose image, which acts effectively by definition, 
	can only be $S^1$ itself if $K_0/H_1 \homeo S^1$
	and $\SO(3)$ if $K_0/H_1 \homeo P^3$~\cite[Thm.~1.1]{bredon1961homogeneous}.
	As $\ker \l$ stabilizes all points, it is in particular contained in $\Hq$.
	The stabilizer of the coset $1\Hq \in K_0/\Hq$ 
	under the effective action of $\im \l$ is $\l(\Hq) \iso \Hq/\ker \l$,
	which must be finite since $\im \l$ is of rank one,
	so $\ker \l$ is of finite index in $H_1$;
	particularly, its identity component must be $H_0$.
	Since $\ker \l$ is normal in $K_0$ by definition,
	so also must be $H_0$.
	}

We now consider the map of $K/H$-bundles
\quation{
	\label{eq:BKsquare}
	\begin{aligned}
		\xymatrix@R=3em@C=1.5em{
			EK/\Hq \ar[r]\ar[d]  & EK/H\ar[d]\\
			EK/K_0 \ar[r] 		& EK/K.
		}
	\end{aligned}
}
The left map is equivariant with respect to the right $H$-action,
inducing an effective right action of 
$\defm \pi \ceq H/\Hq \iso \pi_0 K$
such that the right map is the quotient,
and so we may identify \cite[Prop.~3G.1]{hatcher} 
the map $\HK \lt \HH$ with the map of invariants
$(\HKz)^\pi \lt (\H_{\Hq})^\pi$.
Now let us consider a portion of the induced map of 
generalized Gysin sequences~\cite[{\SS}3.7]{mimuratoda}:
\[
\xymatrix{
\HK \ar[r]^(.45){\cup e}  \ar@{ >->}[d]& H^{*+n+1}_{K} \ar[r]^{\rho^*} 
\ar@{ >->}[d]& H^{*+n+1}_{H} \ar@{ >->}[d]\\
\HKz \ar[r]^(.45){\cup e} & H^{*+n+1}_{K_0} \ar[r]&H^{*+n+1}_{H_1}.
}
\]
The commutativity of the diagram implies the identification
$\HK \iso (\HKz)^\pi$ takes the one generalized Euler class to the other,
or in other words that the class in the lower sequence is $\pi$-invariant.
Since $\HKz$ is a polynomial ring, multiplication by $e$ 
is injective, so the horizontal maps before and after are zero,
so this is actually an inclusion of short exact sequences.
As the image of multiplication by $e$ is precisely the principal ideal $(e)$,
we obtain an $\pi$-equivariant isomorphism $\H_{H_1} \iso \HKz / (e)$.
As $\HKz \lt \H_{\Hq}$ is a
$\pi$-equivariant surjection between graded polynomial rings over $\Q$
on $n+1$ and $n$ generators, respectively, 
whose kernel is generated by the $\pi$-invariant element $\p$,
\Cref{thm:polysurj} applies 
to yield a $\pi$-equivariant isomorphism 
$\H_{\Hq}[\p] \isoto \HKz$.
This restricts to an isomorphism of $\pi$-invariants
$\smash{\HH[\p] \isoto \HK}$.
\epf

\brmk\label{thm:eulerodd}
When $K/H$ is a sphere,
the generalized Euler class 
featuring in \Cref{thm:odd}
is well known to be the standard Euler class of a sphere bundle%
~\cite[Thm.~5.17, pp.~145--6]{mimuratoda}.
If $K/H \homeo P^3$, 
on the other hand,
from the fact that the action $K \lt \Homeo P^3$
factors through an $\SO(3)$ subgroup~\cite[Thm.~1.1]{bredon1961homogeneous},
one can associate to $BH \lt BK$
the principal $\SO(3)$-bundle $\xi\: EK \x_K \SO(3) \lt BK$
and show $e$ is $60$ times the first Pontrjagin class $p_1(\xi)$.
\ermk

\brmk
The persistent orientability hypothesis is necessary;
we will see in \Cref{rmk:eg:mostert} that 
\Cref{thm:spherefiber} fails without this hypothesis.
For now, consider the case of $K = \O(2)$ 
and $H$ a subgroup of order $2$ generated by an element $h$
of determinant $-1$.
Then for $z \in \SO(2)$
we have $h\. zH = z^{-1}H$.
We have $\HKz = \H_{\SO(2)} = \Q[c_1]$ 
and $c_1 = \p$ in the notation of the proof 
since $\smash{\HHz = \H_{\{1\}} = \Q}$.
The proof would go through if we had $\p = c_1 \in \HK$,
but we do not;
in fact $\H_{\O(2)} \iso \Q[c_1^2]$,
where $c_1^2$ is represented by the first Pontrjagin
class $p_1$ of the tautological $2$-plane bundle over
the infinite Grassmannian 
$G(2,\R^\infty) = B\O(2)$~\cite[Thm.~3.16(a)]{VBKT}.
Thus, although it is incidentally true in this case 
that $\HK \iso \HH[c_1^2]$, the proof 
of \Cref{thm:odd} cannot possibly go through.
\ermk

\blem\label{thm:polysurj}
Suppose $A$ is a graded polynomial ring in finitely many variables over $\Q$
equipped with an action of a finite group $\pi$ fixing 
the field of constants $\Q$, 
and $x$ is a $\pi$-invariant homogeneous element of $A$
such that $B = A/(x)$ is again a polynomial ring. 
Then there is a 
$\pi$-invariant graded $\Q$-subalgebra 
$A' < A$ 
such that $A = A'[x]$ and $\smash{A' \inc A \xepi{\defm\vp} B}$ is a ring isomorphism.
\elem
\bpf
We consider $A$ and $B$ as modules over the group ring $\Q\pi$.
It is easy to see that any $\Q\pi$-module $C$ 
complementary to $(x)$
will be taken bijectively and $\Q\pi$-linearly to $A/(x)$
by $\vp$, so we just need to show 
such a complement can be chosen to be a ring
and generators of this ring can be chosen such that together with $x$
they form a set of $\Q$-algebra generators for $A$. 
We write $\defm Q B = B^{\geq 1} / B^{\geq 1}\.B^{\geq 1}$
for the graded $\Q$-module of indecomposables,
in this case a free module.
As $QB$ is a finite-dimensional $\pi$-representation
and the order of $\pi$ is invertible over $\Q$,
we may break $QB$ into irreducible representations of $\pi$,
which are cyclic $\Q\pi$-modules, each generated by an element $\bar b_j$.
We may lift each of these to a homogeneous element $b_j \in B^{\geq 1}$.
By construction, the union of the $\pi$-orbits of the $b_j$ 
forms a set of $\Q$-algebra 
generators for $B$.
Now let $a_j \in C$ be a $\vp$-preimage of $b_j$.
Then the $\Q\pi$-algebra $A'$ generated by the $a_j$ is taken 
bijectively onto $B$ by $\varphi$, so it is a polynomial subalgebra
and in fact another $\Q\pi$-linear complement to $(x)$.
It is clear from the isomorphism $A' \simto B$ that 
each element of $A$ can be represented uniquely as a polynomial in $x$ over $A'$.
\epf

The case $K/H$ is even-dimensional is simpler.

\bprop[{\cite[Thm.~26.1(a)]{borelthesis}\cite[p.~1121]{samelson1941samelson}}]\label{thm:even}
Let $H \leq K$ be compact Lie groups of equal rank. 
Then $\HK \lt \HH$ is injective.
This applies particularly if $K/H$ is an
even-dimensional sphere.
In this case, 
if the bundle $\rho\: BH \lt BK$ is orientable 
and $n = \dim K/H \geq 2$,
then $\HH$ is a free $\HK$-module of rank two
on $1$ and a lift $\defm \p \in H_H^n$ of the fundamental class 
of $K/H$ under the surjection $\H(BH) \lt \H(K/H)$.
\eprop
Samelson showed the ranks are equal if $K/H$ is an even-dimensional
sphere and the injectivity statement is due to Borel.
\bpf
The covering $\pi_0 K \to BK_0 \to BK$
coming from the action of $\pi_0 K = K/K_0$
on $BK_0 = EK/K_0$,
induces an isomorphism $\HK \iso (\HKz)^{\pi_0 K}$
by a standard transfer lemma \cite[Prop.~3G.1]{hatcher}.
As $\HKz$ is a polynomial ring by Borel's theorem,
the \SSS of $K/H \to BH \to BK$ is concentrated in even degree
and so collapses. 
By \Cref{thm:simple}, 
the coefficients are simple,
so $\HH \iso \HK \ox \H(K/H) \iso \HK\{1,\p\}$ as an $\HK$-module
for some $e$ represented by $1 \ox {[K/H]}$
in the associated graded algebra.
\epf

\brmk\label{rmk:pinv}
Note that the basis $\{1,\p\}$ is preserved under the map 
\[
\HH \iso \HK \ox \H(K/H) \longmono \HKz \ox \H(K_0/H_0) \iso  \HHz
\]
induced by the map of $K/H$-bundles $(BH_0 \to BK_0) \lt (BH \to BK)$,
so $\p \in \HHz$ may be chosen $\pi_0 H$-invariant.
\ermk

\brmk
The lift $e$ in \Cref{thm:even} can be chosen to be the
pullback under $BH \lt B\SO(n)$
of the universal Euler class $e_{\SO(n)} \in H^n_{\SO(n)}$.
To see this,
note that by the classification of transitive Lie group actions on spheres%
~\cite[Ex.~7.13]{Be},
the action $K \lt \Homeo K/H$ must factor through 
a subgroup isomorphic to $\SO(n+1)$, 
sending $H$ into an $\SO(n)$ subgroup 
and hence inducing a map of $S^n$-bundles from 
$BH \to BK$ to $B\SO(n) \to B\SO(n+1)$.
Both {\SSS}s collapse at $E_2$,
and the $E_2$ map 
$\H_{\SO(n)} \ox \H S^n \lt \HK \ox \H(K/H)$
sends $1 \ox {[S^n]} \lmt 1 \ox {[K/H]}$.
But $e$ represents $1 \ox {[K/H]}$,
and since $\H_{\SO(n)} \iso \H_{\SO(n+1)}\{1,e_{\SO(n)}\}$ 
as an $\H_{\SO(n+1)}$-module,
$e_{\SO(n)}$ represents $1 \ox {[S^n]}$.

\ermk


\section{Double mapping cylinders which are manifolds}\label{sec:special}

In this section we are in the situation of 
\Cref{thm:mainH} and additionally the isotropy quotients
$\Kpm/H$ are homology spheres. 

 \subsection{The case when one of $\Kpm/H$  
 	is odd-dimensional}\label{sec:odd}
 
If $K^+/H$ is odd-dimensional, 
then $\HKp \lt \HH$ is surjective, 
so $\HGo \man$ vanishes by \Cref{thm:mainH} 
and $\HG \man = \HGe \man$
is easily described. 

\Hodd*
 \begin{proof}
%
%
 	(a)\, 
 	Since the map $\HKp \simto \HH[\p] \to \HH$ is reduction modulo $(\p)$ 
 	by \Cref{thm:odd}
 	and $\HKm \lt \HH$ is an injection by \Cref{thm:even},
 	the fiber product is the subring of $\HKm \x \HH[\p]$
 	consisting of the direct summands 
 	$\big\{(x,x) \in \HKm \x \HKm\big\}$
 	and
 	$\{0\} \x \p \. \HH[\p]$.
 	We may identify the former with
 	$\HKm < \HH < \HH[\p]$
 	and the latter with 
 	$\p \HH[\p] \idealneq \HH[\p]$,
 	and the two interact multiplicatively via the rule
 	$x\. \p f \bij (x,x)\.(0,\p f) = (0,\p fx) \bij \p fx$.
 	%
 	
 	\medskip
 	
 	(b) 
 	Using \Cref{thm:odd} to make identifications $\HKpm \iso \HH[\ppm]$
 	such that $\HKpm \lt \HH$ is reduction modulo $(\ppm)$,
 	we see the fiber product is the subring of $\HH[\p_-] \x \HH[\p_+]$ 
 	comprising the three direct summands
 	\[
	 	\big\{(x,x) \in \HH \x \HH \big\},\qquad\qquad
	 	\p_- \HH[\p_-] \x \{0\},\qquad\qquad
	 	\{0\} \x \p_+ \HH[\p_+],
 	\]
 	on which multiplication is determined by the three rules 
 	\[
	 	(x,x) \. (\p_-f_-,0) = (\p_-f_-x,0),\qquad
	 	(x,x) \. (0,\p_+ f_+) = (0,\p_+ f_+x),\qquad
	 	(\p_- f_-,0)\.(0,\p_+ f_+) = (0,0),
 	\]
 	so the map to $\HH[\p_-,\p_+]/(\p_-\p_+)$
 	sending $(x + \p_-f_-,x + \p_+ f_+)\lmt x + \p_- f_- + \p_+ f_+$
 	is a ring isomorphism.
 \end{proof}

\brmk\label{rmk:CMKodd}
The second and fourth author have shown \cite{goertschesmare2014,goertschesmare2017} 
that for any cohomogeneity-one action of a compact, connected Lie group $G$ 
on a compact, connected topological manifold $M$, 
the equivariant cohomology $H^*_G M$ is a Cohen--Macaulay 
module over $H^*_G$. 
In the special case when all the hypotheses of \Cref{thm:Hodd}
are fulfilled, 
this result can be recovered easily from that theorem. 
Concretely, in case (a), 
the equivariant cohomology 
is a direct sum of two Cohen--Macaulay modules over $H^*_G$ 
and in case (b) 
it is an algebra over $H^*_G$
finitely generated as an $\HG$-module and Cohen--Macaulay as a ring. 
\ermk

\brmk
Special cases of the actions investigated in this section are also considered
by Choi and Kuroki~\cite{kuroki2011classification,choikuroki2011}.
\ermk

\subsection{The case when both of $\Kpm/H$ are even-dimensional}

In this subsection we assume that $K^-$, $K^+$, and $H$ 
have all three the same rank,
or equivalently that $M$ is a manifold with $\Kpm/H$
even-dimensional.
We start with the special case where $K^- = K^+$.

\begin{restatable}{proposition}{spherefiber}\label{thm:spherefiber}
	Assume $\defm{K} \ceq K^+ = K^-$,
	that $K/H=S^{2n}$ is an even-dimensional sphere,
	and that the bundle $BH \lt BK$ is orientable.
	Then we have an $\H(BG;\Z)$-algebra isomorphism
	\[
	\HG(\man;\Z) \iso \H(BK;\Z) \ox \H(S^{2n+1};\Z),
	\]
where the $\H(BG;\Z)$-algebra structure is induced
by the inclusion $K \longinc G$.
\end{restatable}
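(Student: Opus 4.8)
The plan is to leverage the hypothesis $K^+ = K^-$, which fuses the two projections $\pi^\pm$ into a single quotient map $\pi\: G/H \to G/K$ and presents $\man$ as the fiberwise unreduced suspension of the $S^{2n}$-bundle $\pi$. Indeed, sending each slice of the double mapping cylinder to its image in $G/K$ exhibits $\man$ as a fiber bundle over $G/K$ whose fiber over $gK$ is the unreduced suspension of $\pi^{-1}(gK)\homeo K/H = S^{2n}$, namely $S^{2n+1}$, and whose two families of cone points furnish two disjoint sections. Passing to the Borel construction, $\man_G \to BK$ becomes an orientable $S^{2n+1}$-bundle---the fiberwise suspension of $\rho\: BH \to BK$, orientable because $\rho$ is---so I would read off $\HG(\man;\Z) = H^*(\man_G;\Z)$ from its Gysin sequence rather than from the (rational) \Cref{thm:mainH}.

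Because the bundle $\man_G \to BK$ inherits a section from the cone points, its Euler class in $H^{2n+2}(BK;\Z)$ vanishes, and the Gysin sequence collapses to short exact sequences
\[
0 \to H^k(BK;\Z) \xrightarrow{p^*} H^k(\man_G;\Z) \to H^{k-(2n+1)}(BK;\Z) \to 0 .
\]
A class $u \in H^{2n+1}(\man_G;\Z)$ restricting to the fundamental class of the fiber splits these as $H^*(BK;\Z)$-modules, giving the module isomorphism $H^*(\man_G;\Z)\iso \H(BK;\Z)\ox \H(S^{2n+1};\Z)$; the $\H(BG;\Z)$-algebra structure then comes by naturality from the map $BK \to BG$ classifying $K \longinc G$.

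The remaining, and decisive, step is to promote this module splitting to a ring isomorphism, i.e. to show the degree-$(2n+1)$ generator can be chosen with $u^2 = 0$; this is the step I expect to be the main obstacle. Graded-commutativity only gives $2u^2 = 0$, and the Gysin sequence pins $u^2$ down merely as a pullback from the base, so over $\Z$ a two-primary ambiguity survives. My intended mechanism is the reflection $\sigma$ of $\man_G$ interchanging the two cones: it covers $\id_{BK}$, swaps the two sections, and restricts on each fiber to a degree-$(-1)$ suspension flip, so that $u$ may be chosen $\sigma$-anti-invariant, forcing $u^2$ to equal its own negative. To turn this into an honest vanishing I would pass to the vector-bundle model $\man_G \iso S(V \oplus \underline{\RR})$ afforded by the factorization $K \to \SO(2n+1)$ of the isotropy action, under which the obstruction to $u^2 = 0$ is identified with the pullback of the Euler class of the equatorial $S^{2n}$-bundle $BH \to BK$, a $2$-torsion class in $H^{2n+1}(BK;\Z)$. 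Rationally, or after inverting $2$, this class vanishes and the argument closes immediately; the genuinely delicate point---and where I expect the orientation hypothesis to be indispensable, in line with its failure in \Cref{rmk:eg:mostert}---is the $2$-primary part of this vanishing.
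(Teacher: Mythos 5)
Your setup coincides with the paper's: both exhibit $\man_G \lt BK$ as an orientable $S^{2n+1}$-bundle (the fiberwise unreduced suspension of $BH \lt BK$), both use a cone-point section to kill the only possible obstruction (for you the Euler class in the Gysin sequence, for the paper the transgression $d_{2n+2}$ in the Serre spectral sequence), and both thereby obtain the $\H(BK;\Z)$-module splitting. The gap is exactly where you predicted it: the integral vanishing of $u^2$. Your reflection mechanism rests on a sign error --- if $\sigma^*u=-u$ then $\sigma^*(u^2)=(-u)(-u)=u^2$, so $u^2$ is $\sigma$-\emph{invariant}, and anti-invariance of the generator yields nothing beyond the relation $2u^2=0$ that graded commutativity already gives. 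Your fallback, identifying the obstruction with the pullback of the ($2$-torsion) Euler class of the equatorial $S^{2n}$-bundle inside the model $S(V\oplus\underline{\R})$, correctly isolates the difficulty but does not resolve it: that class need not vanish in $H^{2n+1}(BK;\Z)$ for an odd-rank bundle, and you offer no argument that it does (nor that the orientability hypothesis forces it to). So as written the proposal proves the statement only after inverting $2$.

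The paper closes this step by a different and quite short observation: the generator $z\in H^{2n+1}_G(\man;\Z)$ is in the image of the Mayer--Vietoris connecting map from $\H(BH;\Z)$, and this connecting map factors as the suspension isomorphism $\H(BH;\Z)\iso \wt H^{*+1}(SBH;\Z)$ followed by the pullback along the map $\man_G \lt SBH$ that collapses each end $BK$ of the double mapping cylinder to a point. Since all cup products of positive-degree classes in a suspension vanish, $z^2$ is the pullback of a square in $\wt H^*(SBH;\Z)$ and hence is zero --- integrally, with no $2$-primary analysis needed. If you want to salvage your write-up, replacing the reflection/Euler-class discussion with this collapse-to-the-suspension argument is the missing idea.
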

\bpf
In this case 
$\man_G$ is the homotopy pushout of $BK \from BH \to BK$,
which we may write as the quotient of $[-1,1] \x BH$ 
by the relation collapsing the ends $\{\pm 1\} \x BH$ to 
one copy of $BK$ each.
There is an obvious map 
\eqn{
	\defm{\xi}\: \man_G &\longepi BK,\\
	[t, eH]	&\lmt eK, \quad t \in (-1,1),\\
	[\pm 1, eK] &\lmt eK.
}
The fiber of this map over $eK \in BK$
is the unreduced suspension $S(K/H) \homeo S^{2n+1}$,
so $\xi$ is a sphere bundle. 
We claim the \SSS of $\xi$ has simple coefficients and collapses at $E_2$.
Indeed, given $eK \in BK$ and a loop $\h$ representing 
a class in $\pi_1(BK,eK)$, if $\wt\h$ is a lift to $EK$ starting at $e$
and ending at $ek$,
a lift of $\h$ starting at $(\pm 1, eK)$  (respectively, $(t,eH)$)
ends at $(\pm 1, ekK) = (\pm 1, eK)$ (resp., $(t,ekH)$).
This action fixes the poles of the fiber $S(K/H)$
and acts as left multiplication by $k$ on each latitude $K/H$,
fixing the orientation of the latitude by \Cref{thm:simple},
so the action preserves the orientation of the fiber $S^{2n+1}$
of $\xi$ as a whole, and thus, again by \Cref{thm:simple}, 
the bundle $\xi$ is orientable.
As there are only two nonzero rows,
to see the spectral sequence collapses
at 
$E_2 \iso  \H\big(BK; \H(S^{2n+1};\Z)\big) = \H(BK;\Z) \ox \H(S^{2n+1};\Z)$,
it is enough to show $d_{2n+2}[S^{2n+1}] = 0 \in H^{2n+2}(BK)$.
But this differential must be zero because 
$\xi$ admits the section $eK \lmt [-1,eK]$,
showing $\xi^*\: \H(BK;\Z) \lt \HG(\man;\Z)$ is injective.
The collapse shows $\HG(\man;\Z) \lt \H(S^{2n+1};\Z)$
is surjective
and $\HG(\man;\Z) \iso \Ei = \H(BK;\Z) \ox \H(S^{2n+1};\Z)$
as a module over $\H(BK;\Z)$.
Thus $1 \in H_G^0(\man;\Z)$
and any preimage 
$z \in H^{2n+1}_G (\man;\Z)$ of the fundamental class 
$[S^{2n+1}] \in H^{2n+1}(S^{2n+1};\Z)$
generate
$\HG(\man;\Z)$ as an $\H(BK;\Z)$-module.

By definition, this $z$ commutes with all elements 
of $\im \xi^* \iso \H(BK;\Z)$,
and we claim it also squares to zero. 
Indeed, from \Cref{thm:mainH}, 
it is in the image of the Mayer--Vietoris connecting map from $\H(BH;\Z)$,
which factors as 
$\smash{\H(BH;\Z) \os\susp\to \wt H^{*+1}(S BH;\Z) \to \wt H^{*+1}(M_G;\Z)}$,
where $\susp$ is the suspension isomorphism
and $M_G \to S BH$ is the map that collapses each end $BK$
of the double mapping cylinder to a point~\cite[{\SS}19.1--3, pp.~146--7]{may1999concise}.
Since the cup product on $\wt H^*(S BH)$ is identically zero
and $z^2$ is the image of a square in $\wt H^*(S BH)$,
we see that indeed $z^2 = 0$.
It follows 
$\HG(\man;\Z) \iso \ext_{\H(BK;\Z)}[z] \iso \H(BK;\Z) \ox \H(S^{2n+1};\Z)$ 
as an $\H(BK;\Z)$-algebra.
\epf

\brmk\label{rmk:eg:mostert}
The orientability hypothesis in the proposition above 
is essential, as can be seen from the action of
${\rm S}{\rm O}(3)$ on $\RR P^3 \,\#\, \RR P^3$ described by Mostert%
~\cite[Thm.~7]{mostert1957cohomogeneity}. 
The isotropies are given by
$K = K^\pm = \mr{S}\big(\Orth(2) \x \Orth(1)\mspace{-1mu}\big) \iso \Orth(2)$ and 
$H=\SO(2)\times \{1\}$,
so $K/H \homeo S^0$ and the bundle is not orientable by \Cref{rmk:connected}.
If the conclusion of Proposition \ref{thm:spherefiber} held in this instance, 
we would have (with $\Q$ coefficients as usual) 
\begin{equation}\label{sop3}
	\H_{\SO(3)}(\RP^3 \,\#\, \RP^3) 
		\iso 
	\H_{\O(2)} \ox \H S^1.
	\end{equation}
But the $\SO(3)$-action at hand is equivariantly formal%
~\cite[Cor.~1.3]{goertschesmare2014}, 
so $\smash{H^*_{\SO(3)}(\RP^3 \,\#\, \RP^3)}$ is isomorphic as an
$H^*_{\SO(3)}$-module to $H^*(\RP^3 \,\#\, \RP^3)\otimes H^*_{\SO(3)}$,
which unlike $\H_{\O(2)} \ox \H S^1$ is zero in dimension $1$.
\ermk

\bex Let us now use \Cref{thm:spherefiber} to compute the equivariant cohomology 
of the action arising from the inclusion diagram 
$(G,\Km,\Kp,H) = 
	\big(
		\Sp(2), 
		\Sp(1)^2, 
		\Sp(1)^2,
		\Sp(1) \x \U(1)
	\big)$. 
For a nice treatment of this manifold we refer to P{\"u}ttmann~\cite[Sect.~4.3]{puettmann2009}. 
From the \MVS, one sees the manifold $M$ has the same integral cohomology as the direct product $S^3\times S^4$. 
By Proposition \ref{thm:spherefiber}, 
\[
	H^*_{\Sp(2)}(M; \Z) 
		\iso 
	\H\big(B\Sp(1)^2;\Z\big)
		\ox 
	H^*(S^3;\Z).
\]
Equivariant formality
of the $\Sp(2)$-action on $M$ was already known over $\Q$~\cite[Cor.~1.3]{goertschesmare2014},
but the inclusion $\Sp(1)^2 \longinc \Sp(2)$
induces an $\H\big(B\Sp(2);\Z\big)$-module isomorphism
$\H\big(B\Sp(1)^2;\Z\big) \iso \H\big(B\Sp(2);\Z\big) \ox \H(S^4;\Z)$,
so the action is actually equivariantly formal over $\Z$. 
\eex

We will now generalize the proposition above to the case when 
$K^-$ and $K^+$ are not necessarily equal.
Assume that $K^{\pm}/H = S^{2\npm}$ for $\defm\npm \ge 1$.
Let $\defm S \leq H$ be a maximal torus and $\defm\Xi$ the subgroup of
$\Aut S$ generated by the Weyl groups $W(\Kpm)$ and $W(H)$. 
The Weyl groups, and hence $\Xi$,
are all contained in the image of the conjugation map
$N_G(S) \to \Aut S$ sending $g \lmt (s \mapsto gsg\-)$.
Since image of this map is compact and $\Aut S \iso \Z^{\dim S}$ is discrete,
$\Xi$ is finite.
Because $K^\pm/H$ are even-dimensional spheres,
by \Cref{thm:even},
$\HH = (\HS)^{W_H}$ is of rank two over $\HKpm = (\HS)^{W_{\Kpm}}$,
so $W(H)$ is an index-two subgroup of each of $W(\Kpm)$ and hence normal.
It follows $W(H)$ is also normal in $\Xi$,
and we will be particularly interested in the quotient group $\Xi/W(H)$.
Note that the involutions $\defm{\wpm} \in W(\Kpm)/W(H) \iso \Z/2$ 
also generate $\Xi/W(H)$, so the latter must be a dihedral group.
Because we will be considering functions on the Lie algebra $\fs$,
in the rest of this section, cohomology will take real or complex coefficients.

\bdefn
	For a compact, disconnected Lie group $\G$ with maximal torus $T$, 
	we continue to define its
	\defd{Weyl group $W(\G)$} as $N_\G(T)/Z_\G(T)$.\footnote{\ 
		We note that this is not the only definition used
		in the literature, 
		and does not agree with the common definition
		invoking a Cartan subgroup as per 
		Segal~\cite[Def.~1.1]{segal1968representation}%
		\cite[{\SS}IV.4]{brockertomdieck}.
		}
\edefn
%
It is easy to see every component of $\G$ contains some element 
of $N_\G(T)$, 
and such elements in the same component 
differ by an element of $N_{\G_0}(T)$,
so there is a well defined action of 
$\pi_0 \G$ on the fixed point set
$\R[\ft]^{W(\G_0)}$
and one has
\[\smash{
	\H(B\G;\R) \iso \H(B\G_0;\R)^{\pi_0 \G} 
\iso \big(\R[\ft]^{W(\G_0)}\big)\mn{}^{\pi_0 \G} =  \R[\ft]^{W(\G)}
}\]
as in the connected case.

\blem\label{thm:eff} 
The action of the dihedral group $\Xi/W(H)$ on
$H^*(BH;\R)=\R[\fs]^{W(H)}$ is effective.
%
\elem

\bpf 
We show any element $\defm{c_g}\mn\: s \mapsto gsg\-$ 
of $\Xi$ that fixes $\R[\fs]^{W(H)}$ pointwise
is already in $W(H)$. 
If $\defm\Upsilon$ is the subgroup of $\Xi$ generated by 
$W(H)$ and $c_g$, then clearly
\begin{equation}\label{eq:smft}
	\R[\fs]^\Upsilon = \R[\fs]^{W(H)}.
\end{equation}
By Molien's theorem~\cite[{\SS}17-3]{kane},
given any action of a finite group $\G$ on a real vector space $V$,
the Poincar{\'e} series in $t$ of $\R[V]^\G$
is a polynomial 
in the variable $(1-t)\-$ 
with leading coefficient $1/|\G|$. 
In our case, this shows $|\Upsilon| = |W(H)|$, so $c_g$ is in $W(H)$ as claimed.
%
%
\epf

\brmk \label{rmk:riemann} 
Assume that $G$ is connected 
and $M$ smooth and equipped with 
a $G$-invariant Riemannian metric and a 
complete geodesic $\defm\g$ in $M$
meeting each orbit orthogonally. 
The \defd{Weyl group} of $\g$
is defined~\cite[{\SS}4]{palaisterng}\cite[{\SS}5]{alekseevsky1993}
to be $\defm{W(\g)} \ceq N_G(\gamma)/Z_G(\gamma)$,
where $\defm{N_G(\gamma)} < G$ is the setwise stabilizer of $\g$
and $\defm{Z_G(\gamma)}$ the pointwise. 
The account of Alekseevsky--Alekseevsky~\cite[{\SS}4,5]{alekseevsky1993} 
shows $G$ acts transitively on the set of such $\g$, 
so we may assume $\g$ passes through $(0,1H) \in (-1,1) \x G/H \subn M$; 
it can also be shown $H$ is the common stabilizer of all points of $\g$
in $(-1,1) \x G/H$,
so $W(\g) = N_G(\g)/H$,
and it follows $\g$ passes through $(\pm 1,1\Kpm) \in \{\pm 1\} \x G/\Kpm$ 
as well.
Further, there are unique involutions $\defm{\s_\pm} \in N_{\Kpm}(H)/H$
acting antipodally on the spheres $\Kpm/H$
and generating $W(\g)$ as a dihedral subgroup of $N_G(H)/H$.


One might hope from this account that the groups $W(\g)$ and $\Xi/W(H)$
are isomorphic,
but they are typically not, 
as we will see in \Cref{EG:PuettmannWeyl}.
There is at least a homomorphism from 
the one to the other, which is an isomorphism if $\rk G = \rk S$. 
To construct it,
observe first that since all maximal tori in $H$ are conjugate,
for any $g\in N_G(H)$ 
there exists an $\defm{h_g} \in H$ such that $gh_g\in N_G(S)$, 
and this specification uniquely determines the left coset 
$h_gN_H(S)$,
defining a homomorphism $g \lmt gh_g N_H(S)$
from $N_G(H) \lt \big(N_G(S) \inter N_G(H)\big) / N_H(S)$;
to see multiplicativity, note that 
for given $gH,g'H \in N_G(H)/H$,
we may make the choice $h_{gg'} = (g')\-h_g g' h_{g'}$.
It is not hard to see the kernel is $H$, 
so there is an induced monomorphism
\eqn{
	\defm\psi\: \frac{N_G(H)}{H}&
	\longmono \frac {N_G(S) \cap N_G(H)}{N_H(S)},\\
					gH &\lmt gh_g N_H(S).
} 
%
%
Following with the map 
$\defm c\: \big(N_G(S) \cap N_G(H)\big)/N_H(S) \lt N_{\Aut S}\big(W(H)\big)/W(H)$
sending $g$ to conjugation of $S$ by $g$,
we obtain a map $N_G(H)/H \lt N_{\Aut S}\big(W(H)\big)/W(H)$.
When restricted to $N_{\Kpm}(H)/H$, 
this $c \o \psi$ takes values in $W(\Kpm)/W(H)$,
so there 
is a restricted map
$\defm{\ol\psi}\: W(\g) \lt \Xi/W(H)$ as claimed.
When $\rk G = \rk S$, 
the map $c$ is injective, so 
$\ol\psi(\s_\pm) = w_\pm$
and $\ol\psi$ is an isomorphism.
%
\ermk

We henceforth write $\defm{D_{2k}}$
for the dihedral group $\Xi/W(H)$,
where $\defm k$ is the order of $w_+w_-$.
Our remaining goal is the following.

\Heven*

This will follow from an 
analysis of the action 
of ${D_{2k}}$ on $\HH$.


\blem\label{thm:wpm} 
Let $E$ be a complex representation of 
$D_{2k} = 
\ang{w_-,w_+ \mid  w_-^2, w_+ ^2, (w_+ w_-)^k}$.
Set $\defm r \ceq w_+ w_-$ 
and $\defm s \ceq w_+$ 
so that $sr = w_-$.
Write $\defm\z$ for the root of unity $e^{2\pi i / k}$
and $\defm{E_\ell}$ for the $\z^{\ell}$-eigenspace of $r$.
\bitem 
\item 
The transformations $w_-$ and $w_+$ agree on $E_0$ 
and are opposite on $E_{k/2}$ if $k$ is even.
\item
Both $w_-$ and $w_+$ exchange each
$E_\ell$ with $E_{-\ell}$.
\eitem
\elem

\bpf 
Multiplying the relation $\id = r r^{-1}$ on the left by $w_+$ yields
the key equation
\quation{\label{eq:wpmeigen}
	\phantom{E_\ell}\quad
	w_+ = w_- r\- = \z^{-\ell} w_- \quad \mbox{on} \ E_\ell.
	}
\bitem
\item
The $\ell = 0$ and $\ell = k/2$ cases of (\ref{eq:wpmeigen})
show $w_- = w_+$  on $E_0$
and $w_- = -w_+$ on $E_{k/2}$.
\item
Since $rw_- = w_+ w_-^2 = w_+$,
(\ref{eq:wpmeigen}) gives
$r w_- v= w_+ v= \z^{-\ell} w_- v$
for $v \in E_\ell$.
On the other hand, multiplying (\ref{eq:wpmeigen}) on the left by $r$ 
yields 
$r w_+ v  = \z^{-\ell} r w_- v =  \z^{-\ell} w_+ v.$
\qedhere
\eitem
\epf

From now on, we specialize \Cref{thm:wpm} 
to the case $E = \defm\HH = \H(BH;\C)$.
We write also $\defm{E_\ell^m} \ceq E_\ell \cap H_H^m$.

\blem\label{thm:eigenw}  
There exist $\smash{\defm \pppm \in H^{2\npm}_H}$ 
such that $\wpm\pppm = -\pppm$
and $\HH = \HKpm \+ \pppm \HKpm$.
\elem

\bpf 
Note that since $\HKpm = (\HH)^{\ang{\wpm}}$, 
the $1$-eigenspace of $\wpm$ is $\HKpm$.
Recall that \Cref{thm:even}
gives a $\HKpm$-basis $\{1,\ppm\}$ for $\HH$;
now $\pppm \ceq (\ppm - \wpm \ppm)/2$
is a $-1$-eigenvector for $\wpm$
and $\{1,\pppm\}$ is another $\HKpm$-module basis for $\HH$.
\epf

Let us now consider the $r$-eigenspace decompositions
$\pppm = \sum \defm{\qpm_\ell}$ 
where $q^\pm_\ell \in E_\ell$ for $\ell \in \Z/k$.
Since the $w_\pm$ interchange $E_\ell$ with $E_{-\ell}$ by \Cref{thm:wpm}, 
one finds $\qpm_{-\ell} = -\wpm\qpm_\ell$,
and specifically that $\wpm q_0^\pm = -q_0^\pm$,
and if $k$ is even, $\wpm\qpm_{k/2} = -\qpm_{k/2}$.
All told, the decomposition is
\begin{equation}\label{eq:ppmeven}
	\pppm =  
	\qpm_0 + 
	\sum_{0 \mspace{1mu}<\mspace{1mu} \ell \mspace{1mu}<\mspace{1mu} k/2} 
	(\qpm_\ell - \wpm\qpm_\ell) +
	\qpm_{k/2},
\end{equation}
where the last term is taken to be zero if $k$ is odd.

\blem\label{thm:cases} 
In (\ref{eq:ppmeven}) 
only one term is non-zero. 
Explicitly, the elements $\pppm$ each lie either in $E_0$, in $E_{k/2}$,
or in $E_\ell \+ E_{-\ell}$ for $0 < \ell < k/2$.
\elem

\bpf Each of terms $\qpm_0$, $\qpm_{k/2}$, and $\qpm_\ell - \wpm\qpm_\ell$ in (\ref{eq:ppmeven}) 
lies in the $-1$-eigenspace of
$w_\pm$ on $H_H^{2\npm}$,
but by \Cref{thm:eigenw} this is the one-dimensional $\C \pppm$, 
which meets at most one of  
$E_0$, $E_{k/2}$, and $E_\ell \+ E_{-\ell}$
nontrivially.
\epf


\blem \label{thm:trichotomy} Exactly one of the following cases obtains:
\begin{enumerate}
	\item[(i)] $k=1$ and $n_+=n_-\eqc \defm n$, 
		and one can rescale 
	$\pppm$ in such a way that $p_+ = p_-\in E_0^{2n}$.

	\item[(ii)] $k=2$ and $\pppm\in E_1$. 
	
	\item[(iii)] $k \geq 2$ and $n_+=n_- \eqc \defm n$, 
		and there is one $j$ relatively prime to $k$ 
		such that $0 < j < k/2$
		and up to rescaling,
		\[
			\pppm = q - \wpm q
		\]
		for the same single element $\defm q\in E_j^{2n}$. 
		Moreover, $\dim E_j^{2n}  = \dim E_{-j}^{2n} = 1$.
\end{enumerate}
\elem 

\bpf 
This is a case analysis following \Cref{thm:cases}.

\medskip

\nd\emph{(i) The case one of $\pppm$ lies in $E_0$}

\smallskip

Suppose $p_+ \in E_0$ and, 
for a contradiction, suppose $m$ is minimal such that
there exists a nonzero $x \in E_\ell^m$
for some $\ell$ indivisible by $k$.
Then $x - w_+ x$ is a $-1$-eigenvector of $w_+$,
hence divisible by $p_+$ by \Cref{thm:eigenw}.
We must have $x = w_+ x$
for otherwise the component of 
$(x- w_+ x)/p_+$ in 
$\smash{E_\ell^{m-2n_+}}$ would contradict minimality of $m$.
Thus $x \in E_{k/2}$ by \Cref{thm:wpm}, 
so $w_- x = - w_+ x = -x$ by \Cref{thm:wpm}
and $p_-$ divides $x$ by \Cref{thm:eigenw}.
It follows, again lest we contradict minimality of $m$,
that $x/p_- \in E_0$ and so $p_- \in E_{k/2}$. 
But then, by \Cref{thm:wpm} again,
$
w_- p_+ =
w_+ p_+ = 
- p_+
$
so $p_-$ divides $p_+$ by \Cref{thm:eigenw}
and we have 
\[
	w_+ \Big(\frac{p_+}{p_-}\Big) = 
	\frac{w_+ p_+ }{w_+ p_-} =
	\frac{-p_+}{-w_- p_-} =
	-\frac{p_+}{p_-},
\]
contradicting the fact $p_+$ has minimal degree in the $-1$-eigenspace
of $w_+$.
Thus in fact $\HH = E_0$. 
Since $r$ then acts trivially 
but \Cref{thm:eff} states the action of $D_{2k}$ is effective, 
it follows $k = 1$.
By \Cref{thm:wpm} the actions of $w_-$ and $w_+$ agree on $E_0$,
so the $\wpm$-eigenspace decompositions of $\HH$
in \Cref{thm:eigenw} coincide
and by rescaling we may assume $p_- = p_+$. 

\medskip

\nd\emph{(ii) The case one of $\pppm$ lies in $E_{k/2}$}

\smallskip

If $p_+ \in E_{k/2}$,
then in fact $\HH = E_0 \+ E_{k/2}$,
for we could otherwise produce from any nonzero homogeneous $x \in E_\ell$,
where $k/2$ does not divide $\ell$, 
an element $(x - w_+ x)/p_+$ of smaller degree
with a nonzero component in $E_{\ell - k/2}$.
It follows $r^2$ acts as the identity on $\HH$, 
and since the action of $D_{2k}$ is effective by \Cref{thm:eff},
we have $k = 2$ and $p_+\in E_1$. 
As for $p_-$, consulting \Cref{thm:cases},
it must lie in $E_0$ or $E_1$,
but if it lay in $E_0$ we would be in the previous case.

\medskip

\nd\emph{(iii) The case $\pppm$ both lie in $E_{\jpm} \+ E_{-\jpm}$ 
	for some $j^\pm$ with $0 < j^\pm < k/2$}

\smallskip

We first note that $n_+=n_-$,
for if we had, say, $n_+>n_-$, 
then by \Cref{thm:eigenw},
$w_+$ would act as the identity
on $\smash{H^{2n_-}_H}$;
but this would imply 
$\smash{p_- = q_{j^-} - w_- q_{j^-}}$ is equal to
$\smash{w_+ p_- = w_+ q_{j^-} - \z^{j^-} q_{j^-}}$,
which could only happen if $\z^{j^-} = -1$ and $j^- \equiv k/2 \pmod k$.

Next we claim $j^+ = j^-$. 
As already mentioned, the $-1$-eigenspace of $w_+$ in 
$H^{2n}_H$ is $\C p_+$, which lies in $E_{j^+} \+ E_{-j^+}$
Since the $\pm 1$-eigenspaces of $w_+$
on $E_\ell^{2n} \+ E_{-\ell}^{2n}$ are 
$\{x \pm w_+x : x \in E_\ell^{2n}\}$
and hence both are of dimension $\dim E_\ell^{2n}$,
but the $-1$-eigenspace is trivial except for $\ell = j^-$,
it follows all the other terms in 
$\Direct_{0 < \ell < k/2} (E_\ell^{2n} \+ E_{-\ell}^{2n})$ 
are zero, so $j^+ = j^- \eqc \defm j$ 
and $\dim E_j^{2n} = \dim E_{-j}^{2n} = 1$. 
Thus we may rescale to take $q_- = q_+$ as claimed.

To see that $j$ and $k$ are coprime,
first note that $\HH = \sum_i E_{i\.\gcd(j,k)}$,
for given a putative nonzero homogeneous element $x \in E_\ell$
such that $\gcd(j,k)$ does not divide $\ell$,
the component of $(x - w_+ x)/p_+$ 
in $E_{\ell-j}$ would be nonzero of smaller degree.
Thus $r^{k/\!\gcd(j,k)} \in D_{2k}$
acts trivially on $\HH$,
and since the $D_{2k}$-action is effective by \Cref{thm:eff},
it follows $\gcd(j,k) = 1$.
\epf

\blem\label{thm:prime} 
The elements $\pppm$ are prime elements of $H^*_H$.
\elem

\bpf 
Recall from \Cref{rmk:pinv} that 
the basis element $\p$ from \Cref{thm:even}
may be taken $\pi_0 H$-invariant.
Thus the same holds of the eigenvectors $\pppm = (\ppm -\wpm\ppm)/2$ 
defined in \Cref{thm:eigenw}.
It is clear in $\HHz$ that $p_\pm$ are irreducible,
for all elements of lesser degree lie in the $1$-eigenspace 
$\smash{H\mspace{-1mu}\substack{*\phantom{k}\\K_0^\pm}}$.
Since $\HHz$ is a polynomial ring, the principal ideals 
$(\pppm)$ are prime.
%
In fact, the ideals $(\pppm)$ are prime in $\HH$ as well, for
given $x,y \in H^*_{H}$ such that $xy \in (\pppm)$,
we know one of the two, say $x$, is divisible by $\pppm$ in $\HHz$.
But then as $x/\pppm$ is $\pi_0 H$-invariant as well,
$x$ is also divisible by $\pppm$ in $\HH$.
\epf

\blem\label{thm:negativeone}
Write $\defm V$ for the joint $-1$-eigenspace of $\wpm$.
Then  
\[
	V \lt \frac {E_0}{(\HKm + \HKp) \cap E_0}
	\lt \frac \HH {\HKm + \HKp}
\] 
are isomorphisms.
\elem

All unelaborated claims in the proof are
clauses of \Cref{thm:wpm}.

\bpf
Write $\defm{E_{\neq 0}} \ceq \Direct_{\ell = 1}^{k-1} E_\ell$
so that we have a direct sum decomposition $\HH = E_0 \+ E_{\neq 0}$.
This decomposition is invariant under $\wpm$,
so $\HKpm$ inherit such decompositions and 
\[
\HKm + \HKp = 
\big([\HKm \cap E_0] + [\HKp \cap E_0]\big) \+
\big([\HKm \cap E_{\neq 0}] + [\HKp \cap E_{\neq 0}]\big).
\]
Because $w_+|_{E_0} = w_-|_{E_0}$,
the first direct summand is the common $1$-eigenspace 
of $w_\pm$ on $E_0$, whose complement is $V$.
We will be done if we can show the second summand is all of $E_{\neq 0}$.
\bitem
\item If $k$ is even, 
$r=w_+ w_-$ acts on $E_{k/2}$ as multiplication by $\z^{k/2} = -1$,
so $w_+|_{E_{k/2}} = -w_-|_{E_{k/2}}$.
Thus $E_{k/2}$ decomposes as the sum of the
$1$-eigenspace $\HKp \cap E_{k/2}$ of $w_+$ on $E_{k/2}$
and its $-1$-eigenspace, which is $\HKm \cap E_{k/2}$.
\item If $0 < \ell < k/2$, 
then since $r$ acts as $\z^\ell \neq \pm 1$ on $E_\ell$,
we have $w_+ v \neq w_- v$ for nonzero $v \in E_\ell$,
so for nonzero $u,v \in E_\ell$, 
we cannot have $u + w_+ u = v + w_- v$ in $E_\ell \+ E_{-\ell}$.
Thus the $1$-eigenspaces $(\id + w_\pm) E_\ell$ of $\wpm$ are disjoint,
and since $\dim E_\ell^m = \dim E_{-\ell}^m$ for all $m \geq 0$,
their sum is all of $E_\ell \+ E_{-\ell}$.\qedhere
\eitem

\epf

We are now finally in a position to prove Proposition \ref{thm:Heven}.

\bpf[Proof of Proposition \ref{thm:Heven}]
We proceed through the trichotomy of \Cref{thm:trichotomy},
in each case applying \Cref{thm:mainH}.

\medskip

\nd\emph{(i) The case $p = p_+ = p_- \in E_0^{2n}$ and $\HH = E_0$}

\smallskip

In this case the actions of $\wpm$ coincide by \Cref{thm:wpm},
so the images $\defm \HK$ of the injections $\HKpm \longinc \HH$ agree.
Thus
\eqn{
	\HGe \man &= \HKm \cap \HKp = \HK;\\
	\HGo \man &= \frac{\HK \+ p \HK}\HK[1] \iso p\HK[1] \iso \HK[2n+1].
}

\medskip

\nd\emph{(ii) The case $k=2$ and $\HH = E_0 \+ E_1$ and $\pppm \in E_1$.}

\smallskip

We show $V = p_+ p_- \. (\HKm \inter \HKp)$
and apply \Cref{thm:negativeone};
since $\deg p_+p_- = 2n_- + 2n_+$, the result will then follow
from \Cref{thm:mainH}.
Suppose $x$ lies in $V$.
From \Cref{thm:eigenw} we know $x$
is divisible by both $p_-$ and $p_+$.  
Note that $w_\mp \pppm = \zeta^{-k/2} \wpm\pppm = \pppm$ by \Cref{thm:wpm}.
As $w_-$ 
sends $x/p_+$ to $w_-x / w_-p_+ = -(x/p_+)$,
we see $p_-$ divides the latter as well.
The quotient $w_\pm (x/p_+ p_-) = -x/{-p_+p}_- = x/p_+p_-$ 
is in the joint $1$-eigenspace $\HKm \inter \HKp$. 


\medskip

\nd\emph{(iii) The case $\pppm = q - \wpm q$ for some $q\in E_j^{2n}$.}

\smallskip

We will find an element $\defm P$ of degree $2nk$
such that $V = P \. (\HKm \cap \HKp)$
and apply \Cref{thm:negativeone}.
Since $\wpm$ generate all $w \in D_{2k}$,
we have $w\. x = \pm x$ for any $x \in V$.
Particularly, $x$ is also divisible by the $w\pppm$, 
which we explicitly enumerate.
Writing $\defm\h = \z^j$,
from \Cref{thm:wpm} we obtain  relations 
$p_- = q - \h w_+ q$
and $r (q - \h^\ell w_+ q) = \eta (q - \h^{\ell-2} w_+ q)$
which suffice to show that if we consider elements only up to 
nonzero complex multiples,
the sets
\[
	\{wp_-, wp_+ : w \in D_{2k}\}
	\qquad
	\mbox{and}
	\qquad
	\{q - \h^\ell w_+q : \ell \in \Z\}
		= 
	\{q - \h^\ell w_-q : \ell \in \Z\}
\]
are equal.
Since $j$ and $k$ are relatively prime by \Cref{thm:trichotomy}.(iii),
the elements $q - \h^\ell w_+q$ for $0 \leq \ell < k$
are all distinct elements,
none a scalar multiple of any other since their $E_j$-components are equal,
and prime by \Cref{thm:prime}.
Since each divides $x$, their product $\defm P$ also divides $x$.
Note that 
\[
	P = 
	\prod_{\ell = 0}^{k-1} (q - \h^\ell w_\pm q) =
	q^k - w_\pm q^k.
\]
But the right-hand side 
is a $-1$-eigenvector of $\wpm$, 
so $\dsp\smash{\wpm \frac xP = \frac{-x}{-P} = \frac xP}$ lies in $\HKm \inter \HKp$.
\epf


%
%

We now illustrate \Cref{thm:Heven} with some examples.

\bex There is a cohomogeneity one action of 
$G = \SU(3)$ on the sphere $S^7$ with 
isotropy groups 
$K^-={\rm S}\big(\U(2)\times \U(1)\big)$, 
$K^+={\rm S}\big(\U(1)\times \U(2)\big)$, and
$H={\rm S}\big(\U(1)\times \U(1)\times \U(1)\big)$%
~\cite[Table E]{grovewilkingziller2008}. 
Observe that $\rk G = \rk H$ and we have $n_-=n_+=1$.
Using \Cref{rmk:riemann} and the table in Grove et al.~\cite{grovewilkingziller2008}, 
one deduces that $k=3$.
Here $H^*_{H}$ is just the quotient ring 
$\Q[t_1, t_2, t_3]/(t_1+t_2+t_3)$, which admits a natural $\Sigma_3$-action
permuting the generators $t_j$.
Within $\HH$ we have $\HKm = (\HH)^{\ang{(1 \ 2)}}$
and $\HKp = (\HH)^{\ang{(2 \ 3)}}$,
whose intersection is $(\HH)^{\Sigma_3} \iso \H_{\SU(3)}$.
\Cref{thm:Heven} implies 
\[
	H^*_{\SU(3)} S^7 = 
	H^*_{\SU(3)} \ox H^* S^7 .
\]
This is in fact a known result, being equivalent to 
the equivariant formality of the action~\cite[Cor.~1.3]{goertschesmare2014}
since there is only one possible 
graded $H^*_{\SU(3)}$-algebra structure for 
a free graded $H^*_{\SU(3)}$-module 
on generators of degrees $0$ and $7$.
\eex

Similar calculations can be made for any of the last five examples 
in Grove et al.~\cite[Table E]{grovewilkingziller2008}. 
Note that they are all equivariantly formal actions. 
This is not the case in the next example.

\bex\label{EG:PuettmannWeyl} 
The left action of $\SU(3)$ on itself given by
$(A,B)		\lmt 	ABA^\top$
has cohomogeneity one~\cite[Example 5.5]{puettmann2009}. 
One can see that relative to the canonical metric on $\SU(3)$, 
there is a
transversal geodesic segment joining the two singular orbits, along which the
isotropies are 
$K^+=\SO(3)$,
$K^-=\SU(2) \x \{1\}$, 
and
$H=\SO(2) \x \{1\}$.
The Weyl group $W(\g)$ turns out to be $D_4$, 
but the induced symmetry group $\Xi/W(H)$ is 
just $\Aut \SO(2) = D_2$, so we are in the case $k = 1$.  
If we write $x$ for a generator of $H^*_{\SO(2)}=\Q[x]$, 
then the canonical maps send $H^*_{\SO(3)}$ and $H^*_{\SU(2)}$ both  
isomorphically to $\Q[x^2]$. 
Since $n_- = n_+ = 1$, one concludes
\[
	H^*_{\SU(3)}\SU(3) \iso \Q[x^2] \ox H^* S^3.
\]
\eex

\brmk\label{rmk:CMKeven}
 Again, as in \Cref{rmk:CMKodd}, 
 one can deduce directly that for $K^\pm$ and $H$ as in \Cref{thm:Heven}, 
 $H^*_G M$ is Cohen--Macaulay as an $\HG$-module.
 This time, one notices that the equivariant cohomology is
 the direct sum of two copies of $H^*_{K^+}\inter H^*_{K^-}=(H^*_H)^{D_{2k}}$,
 the latter being a $H^*_G$-algebra which is 
 finitely generated as an $\HG$-module and Cohen--Macaulay as a ring. 
 \ermk

\brmk
When $\rk G$ is
also equal to $\rk H$, the $G$-action is equivariantly formal. 
In this case $M$ is odd-dimensional and the 
situation is particularly nice because the 
restricted action 
of a maximal torus of $G$ is of \emph{GKM type}
in a sense recently defined
by the third author~\cite[Example 4.17]{he2016localization}.
\ermk

{\footnotesize

}

\nd\footnotesize{%
\nd\textsc{Department of Mathematics, 
		University of Toronto}\\ 
	\url{jcarlson@math.toronto.edu}
	
\medskip	

\nd\textsc{Fachbereich Mathematik und Informatik, 
		Philipps-Universit\"at Marburg}\\
		\url{goertsch@mathematik.uni-marburg.de}

\medskip

\nd\textsc{Yau Mathematical Sciences Center, 
			Tsinghua University}\\ 
		\url{che@math.tsinghua.edu.cn}

\medskip

\nd\textsc{Department of Mathematics and Statistics, 
		University of Regina}\\
	\url{mareal@math.uregina.ca}
}

\end{document}